\theoremstyle{plain}
\newtheorem{thm}{Theorem}[section]
\newtheorem{cor}[thm]{Corollary}
\newtheorem{lem}[thm]{Lemma}
\newtheorem{prop}[thm]{Proposition}
\theoremstyle{definition}
\newtheorem{defn}[thm]{Definition}
\newtheorem{exmp}[thm]{Example}
\theoremstyle{remark}
\newtheorem{rem}[thm]{Remark}
\begin{document}

\title{Slice depth of 2-knots}
\author{Ayaka Ise}
\address{}
\email{}
\keywords{}
\date{}

\maketitle

\begin{abstract}

We introduce the notion of slice depth of a $2$-knot $K$, which is the minimal integer $n$ such that 
$K$ is $n$-slice. We give an upper bound for the slice depth of the $n$-twist spin 
of a classical knot which belongs to several specific classes, namely, 
certain types of $2$-bridge knots, pretzel knots, ribbon knots of $1$-fusion, and knots with given 
unknotting number. 
\end{abstract}


\section{Introduction}


A $2$-knot is a smoothly embedded $2$-sphere in $S^4$. 
Two $2$-knots $K_0$ and $K_1$ are said to be {\it concordant} if there exists a proper embedding 
$\varphi:S^2\times [0,1]\rightarrow S^4\times [0,1]$ such that $\varphi(S^2\times \{0\})=K_0\times\{0\}$ and 
$\varphi(S^2\times\{1\})=K_1\times\{1\}$, where $\varphi$ is called a {\it concordance} from $K_0$ to $K_1$. 
For a non-negative integer $n$, $K_0$ and $K_1$ are said to be $n$-{\it concordant} if there exists 
a concordance $\varphi$ from $K_0$ to $K_1$ such that the maximal genus of components of 
the closed orientable surface $\varphi(S^2\times [0,1])\cap (S^4\times \{t\})$ for all generic $t\in [0,1]$ 
is at most $n$ (See Melvin \cite{Melvin}). 
Kervaire \cite{Kervaire} proved that every $2$-knot $K$ is {\it slice}, that is, concordant to a trivial $2$-knot. 
This implies that every $2$-knot $K$ is $n$-{\it slice}, that is, 
$n$-concordant to a trivial $2$-knot, for sufficiently large $n$. 
This paper aims to determine the minimal integer $n$ for which a given $2$-knot is $n$-slice. 

\begin{defn}
The {\it slice depth} $\operatorname{sd}(K)$ of a $2$-knot $K$ is the minimal integer $n$ such that $K$ is $n$-slice. 
\end{defn}

Note that a $2$-knot $K$ is $n$-slice if and only if $\operatorname{sd}(K)$ is less than or equal to $n$. 
In particular, $K$ is $0$-slice if and only if $\operatorname{sd}(K)$ is equal to $0$. 

It was not known until recently whether there exists a $2$-knot which is not $0$-slice 
(See Problem 1.105 on the Kirby problem list \cite{Kirby}). 
Sunukjian \cite{Sunukjian} proved that there are infinitely many distinct $0$-concordance classes of $2$-knots, 
confirming the existence of $2$-knots which are not $0$-slice. 
Dai and Miller \cite{DM} proved that the monoid of $0$-concordance classes of $2$-knots 
under the connected sum operation is not finitely generated. 
Joseph \cite{Joseph} showed that the Alexander ideal induces a homomorphism from the $0$-concordance 
monoid of $2$-knots to the ideal class monoid of the Laurent polynomial ring over $\mathbb{Z}$. 
From these results, we have many examples of $2$-knots with positive slice depth. 

In this paper, we investigate methods for estimating the slice depth from above. 
We will give an upper bound for the slice depth of the $n$-twist spin $\tau^n(k)$ of a classical knot $k$ 
which belongs to several specific classes. For every positive integer $n$, we will show 

$\bullet$ 
$\operatorname{sd}(\tau^n(k))\leq 1$ for certain types of $2$-bridge knots $k$ (Theorem \ref{THM2-bridge}); 

$\bullet$ 
$\operatorname{sd}(\tau^n(k))\leq 2$ for certain types of pretzel knots $k$ (Theorem \ref{THMPretzel}); 

$\bullet$ 
$\operatorname{sd}(\tau^n(k))\leq 2$ for certain types of ribbon knots of $1$-fusion $k$ (Theorem \ref{THMribbon}); 

$\bullet$ 
$\operatorname{sd}(\tau^n(k))\leq u$ for certain types of knots $k$ with unknotting number $u$ (Theorem \ref{THMunknotting}). 

\noindent
Combining Theorem 4.1 below with Corollary 1.2 of \cite{DM}, 
we conclude that $\operatorname{sd}(\tau^n(k))= 1$ for certain types of $2$-bridge knots $k$. 
In order to obtain upper bounds exhibited above, 
we will adapt Satoh's argument \cite{Sato} for estimating unknotting numbers of twist spun knots. 

This paper is organized as follows. 
In Section 2, we review a lemma of Satoh \cite{Sato} on isotopies of 
$1$-handles attached to twist spun knots. 
In Section 3, we visualize belt spheres of $1$-handles attached to twist spun knots, 
and describe their changes under isotopies of $1$-handles. 
In Section 4, we prove our main theorems on upper bounds for twist spun knots. 
We end this paper by comparing slice depth with unknotting number for $2$-knots in Section 5.


\section{Isotopies of twist spun knots}


In this section, we review a definition of twist spun knots and a lemma of Satoh \cite{Sato} 
on isotopies of them. 

Let $\mathbb{R}^3_+=\{(x,y,z)\in\mathbb{R}^3\, |\, z\geq 0\}$ be the closed upper half-space 
and $I=[0,1]$ the unit interval. 
A {\it tangle} $T$ in $\mathbb{R}^3_+$ is a union of an arc and circles properly embedded in $\mathbb{R}^3_+$. 
We assume that the knotting part of $T$ is included in the ball 
$B=\{(x,y,z)\in\mathbb{R}^3\, |\, x^2+y^2+(z-2)^2\leq 1\}$ 
and $T-\mathrm{Int}\, B$ is a pair of trivial arcs 
connecting $(0,\pm 3,0)$ to $(0, \pm 1, 2)$ in $\mathbb{R}^3_+$. 
For a non-negative integer $n$, let $f_t^n:\mathbb{R}^3_+\rightarrow\mathbb{R}^3_+\, (t\in I)$ be 
an ambient isotopy such that each $f_t^n$ fixes $T-\mathrm{Int}\, B$ pointwise and 
$\{f_t^n\}_{t\in I}$ rotates the ball $B$ $n$ times around the axis $\{(0,y,2)\in\mathbb{R}^3\, |\, y\in \mathbb{R}\}$. 

The quotient space of the equivalence relation $\sim$ on $\mathbb{R}^3_+\times I$ generated by 
$(x,y,0,t)$ $\sim (x,y,0,s)$ for every $(x,y)\in\mathbb{R}^2$ and $t,s\in I$ and 
$(x,y,z,0)\sim (x,y,z,1)$ for every $(x,y,z)\in\mathbb{R}^3_+$ can be identified with $\mathbb{R}^4$. 
We denote the image of $\{(f_t^n(p),t)\in\mathbb{R}^3_+\times I\, |\, p\in T, t\in I\}$ 
under the natural surjection $\pi:\mathbb{R}^3_+\times I\rightarrow \mathbb{R}^4$ by $F^n(T)$, 
and that of $\mathbb{R}^3_+\times \{t\}$ by $\mathbb{R}^3_+[t]$. 
The $2$-knot $\tau^n(k)=F^n(T)$ in $\mathbb{R}^4 \, (\subset \mathbb{R}^4\cup\{\infty\}=S^4)$ 
is called the $n$-{\it twist spin} of a classical knot $k$ if $T$ has no circles and $k$ is the closure of $T$. 
The $0$-twist spin $\tau^0(k)$ of $k$ is often called the {\it spin} of $k$. 

A {\it band} $b$ for $T$ is an embedding of $I\times I$ into $B$ such that $T\cap b(I\times I)=b(I\times\partial I)$. 
The image $b(I\times I)$ is often denoted by $b$ for short. 
The $1$-handle $h$ associated with $b$ is defined as the image of 
$\{(f_t^n(p),t)\in\mathbb{R}^3_+\times I \, |\, p\in b(I\times I),\, t\in [0,1/2]\}$ under $\pi$. We denote the image of 
$\{(f_t^n(p),t)\in\mathbb{R}^3_+\times I \, |\, p\in b((0,1)\times I),\, t\in (0,1/2)\}$ under $\pi$ by $h^{\circ}$. 
Let $\mathcal{B}=b_1\cup\cdots\cup b_m$ be a disjoint union of $m$ bands for $T$ such that 
$T\cup\mathcal{B}$ is connected, and 
$\mathcal{H}=h_1\cup\cdots\cup h_m$ the union of $1$-handles associated with $b_1,\ldots ,b_m$. 
The surface $F^n(T,\mathcal{B})$ embedded in $\mathbb{R}^4$ is defined as 
$(F^n(T)\cup\mathcal{H})-(h_1^{\circ}\cup\cdots\cup h_m^{\circ})$, 
and it is called the surface-knot obtained from $F^n(T)$ by {\it surgery along $1$-handles} $h_1,\ldots ,h_m$. 
Note that an embedded cobordism between $F^n(T)$ and $F^n(T,\mathcal{B})$ is constructed from 
$F^n(T)\cup\mathcal{H}$. 

\begin{lem}[Satoh \cite{Sato}, Lemma 4]
Let $(T,\mathcal{B})$, $(T',\mathcal{B}')$ be pairs of a tangle without circles and a disjoint union of 
$m$ bands for the tangle such that the union of them is connected. 
If $(T',\mathcal{B}')$ is obtained from $(T,\mathcal{B})$ by a finite sequence of the operations \textup{(1)--(6)}
depicted in Figure 1, then $F^n(T',\mathcal{B}')$ is ambient isotopic to $F^n(T,\mathcal{B})$. 
\end{lem}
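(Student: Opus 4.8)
The plan is to reduce to a single move and then, for each of (1)--(6), construct an explicit ambient isotopy of $\mathbb{R}^4$. Since ``ambient isotopic in $\mathbb{R}^4$'' is an equivalence relation, it suffices to treat the case in which $(T',\mathcal B')$ is obtained from $(T,\mathcal B)$ by exactly one application of one move; the general statement then follows by composing the resulting isotopies. It is convenient to keep the $1$-handles present throughout: an ambient isotopy of $\mathbb{R}^4$ carrying $F^n(T)\cup\mathcal H$ to $F^n(T')\cup\mathcal H'$ and matching $h_i$ to $h_i'$ (together with their cores and cocores) automatically carries the surgered surface $F^n(T,\mathcal B)=(F^n(T)\cup\mathcal H)-\bigcup_i h_i^{\circ}$ to $F^n(T',\mathcal B')$, so I will always produce such an isotopy.

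The basic device is to \emph{spin a $3$-dimensional isotopy}. Recall that each $f^n_t$ may be taken supported in a small neighborhood of $B$, so it fixes a neighborhood of $\{z=0\}$ pointwise, and that $f^n_0=f^n_1=\operatorname{id}$ since a full rotation by $2\pi n$ is the identity. Suppose a given move is realized by an ambient isotopy $\{g_s\}_{s\in I}$ of $\mathbb{R}^3_+$ with $g_0=\operatorname{id}$, supported in $\mathrm{Int}\, B$, taking $T$ to $T'$ and the bands $b_1(I\times I),\dots,b_m(I\times I)$ to $b_1'(I\times I),\dots,b_m'(I\times I)$. Setting $\Phi_s|_{\mathbb{R}^3_+[t]}=f^n_t\circ g_s\circ (f^n_t)^{-1}$ gives a well-defined family of diffeomorphisms of $\mathbb{R}^4=(\mathbb{R}^3_+\times I)/\!\sim$: the identification over $\{z=0\}$ is respected because both $g_s$ and every $f^n_t$ fix that plane pointwise, and the identification $t=0\sim t=1$ is respected because $f^n_0=f^n_1=\operatorname{id}$. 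Then $\{\Phi_s\}$ is an ambient isotopy with $\Phi_0=\operatorname{id}$, and a direct check in the defining slices shows $\Phi_1(F^n(T)\cup\mathcal H)=F^n(T')\cup\mathcal H'$ with the handles matched. This disposes of every move that is an honest spatial isotopy of the pair $(T\cup\mathcal B,\,B)$ rel $T-\mathrm{Int}\, B$: re-embedding or translating a band, sliding a band along $T$, handle slides of one band over another, and planar isotopies of the underlying tangle-band diagram.

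The remaining moves change the tangle-band diagram combinatorially — a subarc of a band passing across a strand of $T$, or the creation/deletion of a canceling pair consisting of a small band and a small loop of $T$, or a move that trades a crossing of $T$ for a change of the adjacent band, or a move exploiting the $n$-fold rotation itself. For these there is no $3$-dimensional isotopy of $(T\cup\mathcal B,\,B)$, and one must use the extra coordinate, i.e.\ the mapping-torus structure provided by $\{f^n_t\}$. The model situation is pushing a piece of a band past a sheet of $F^n(T)$: as $t$ ranges over $I$ the relevant arc of the band and the sheet both sweep around the axis $\{(0,y,2)\in\mathbb{R}^3\}$, and one builds an ambient isotopy $\{\Phi_s\}$ whose slice at time $t$ drags the arc of the band once around that axis (rather than conjugating a fixed $g_s$), which closes up consistently at $t=0\sim t=1$ because the net rotation is by $2\pi n$, and which separates the band from the sheet without introducing new intersections; the deletion of a canceling pair is the standard ``finger move'' collapsing a trivial $1$-handle, spun along with the rest. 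I expect the bulk of the work — and the main obstacle — to be exactly this: for each such move, exhibiting a parametrized family of slicewise isotopies that (a) implements the combinatorial change, (b) is the identity near $\{z=0\}$ and near $T-\mathrm{Int}\, B$, (c) is compatible with the gluing $t=0\sim t=1$, and (d) keeps the $1$-handles $h_1,\dots,h_m$ embedded, mutually disjoint, and correctly attached to $F^n(T)$ throughout. Once each of (1)--(6) is verified in this fashion, transitivity of ambient isotopy yields the lemma.
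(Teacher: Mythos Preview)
The paper does not give its own proof of this lemma: it is quoted as Lemma~4 of Satoh \cite{Sato} and used as a black box, with no argument supplied. There is therefore nothing in the paper to compare your attempt against.

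On the substance of your sketch: the spinning construction $\Phi_s|_{\mathbb{R}^3_+[t]}=f^n_t\circ g_s\circ (f^n_t)^{-1}$ is correct and does handle every move among (1)--(6) that is realized by a genuine $3$-dimensional ambient isotopy of $T\cup\mathcal B$ inside $B$. But for the remaining moves you explicitly offer only a plan, not a proof --- you write ``I expect the bulk of the work \dots\ to be exactly this.'' Those are precisely the moves that make the lemma nontrivial (for instance, the crossing-change move used throughout Section~4 via Remark~3.1, and the move~(6) whose effect on belt spheres is analyzed in Section~3), and nothing in your proposal actually constructs the required $4$-dimensional isotopies for them. Your conditions (a)--(d) correctly name what must be verified, but the verification is absent. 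Until explicit slicewise isotopies realizing those moves are written down and checked, the argument remains incomplete; for that one must go to Satoh's original paper.
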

If $(T',\mathcal{B}')$ is obtained from $(T,\mathcal{B})$ by a finite sequence of the operations \textup{(1)--(6)}, we say that $(T',\mathcal{B}')$ is {\it equivalent} to $(T,\mathcal{B})$.

\begin{figure}[tbh]
\centering
\includegraphics[width=12cm]{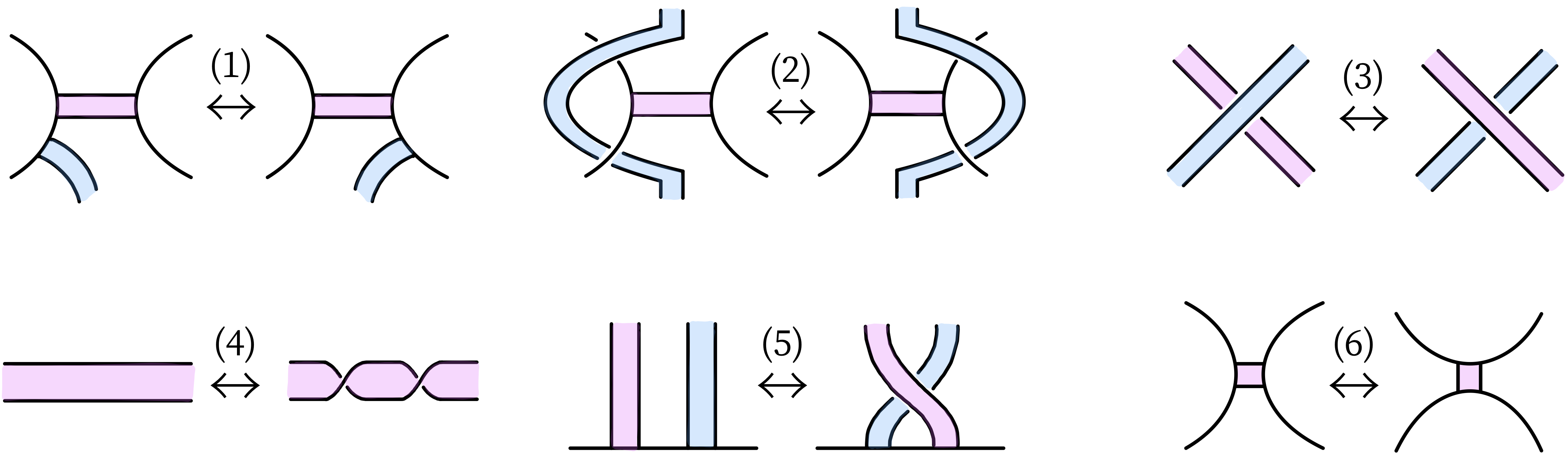}
\caption{Equivalence operations.}
\label{}
\end{figure}


\section{Tracking Belt Spheres of 1-Handles}
\label{}


The purpose of this section is to construct a concordance between a twist spun knot $F^n(T)$ and a trivial $2$-knot.
To this end, we perform 1-handle surgeries on $F^n(T)$ to obtain $F^n(T,B)$. If all 1-handles can be cancelled by corresponding 2-handles, then we obtain the trivial 2-knot. In this way, the intended concordance is obtained, and the number of attached 1-handles provides an upper bound for the slice depth (Proposition~\ref{sliceprop}).
The essential difficulty is whether each 1-handle can be cancelled by attaching a corresponding 2-handle, that is, whether it is possible to attach a 2-handle whose attaching sphere intersects the belt sphere of the corresponding 1-handle transversely in a single point. In particular, we investigate the possibility of attaching 2-handles in this manner by applying the operations (1)--(6) to $(T,B)$ and tracking the changes in the belt spheres of the 1-handles of $F^n(T,B)$.

A significant change in the belt sphere of the 1-handle occurs when the operation (6) is performed. In this operation, the belt sphere, which originally lies in $\bigcup_{t\in[0, 1/2]}\mathbb{R}_+^3[t]$, is shifted to $\bigcup_{t\in[1/2, 1]}\mathbb{R}_+^3[t]$. The upper and lower rows of Figure \ref{belt sphere(6)} present motion pictures showing a portion of $F^n(T,\mathcal{B})$. Each sequence was extracted so as to include the changes resulting from the application of the operation (6).
In the diagram before applying the operation (6), the belt sphere of the 1-handle appears as a blue vertical line at $t=0$, as two points for $0<t<1/2$, and again as a blue vertical line at $t=1/2$.
After applying the operation (6), the belt sphere appears as a blue vertical line at $t=1/2$, as two points for $1/2<t<1$, and finally as a blue vertical line at $t=1$.

\begin{figure}[tbh]
\centering
\includegraphics[width=9cm]{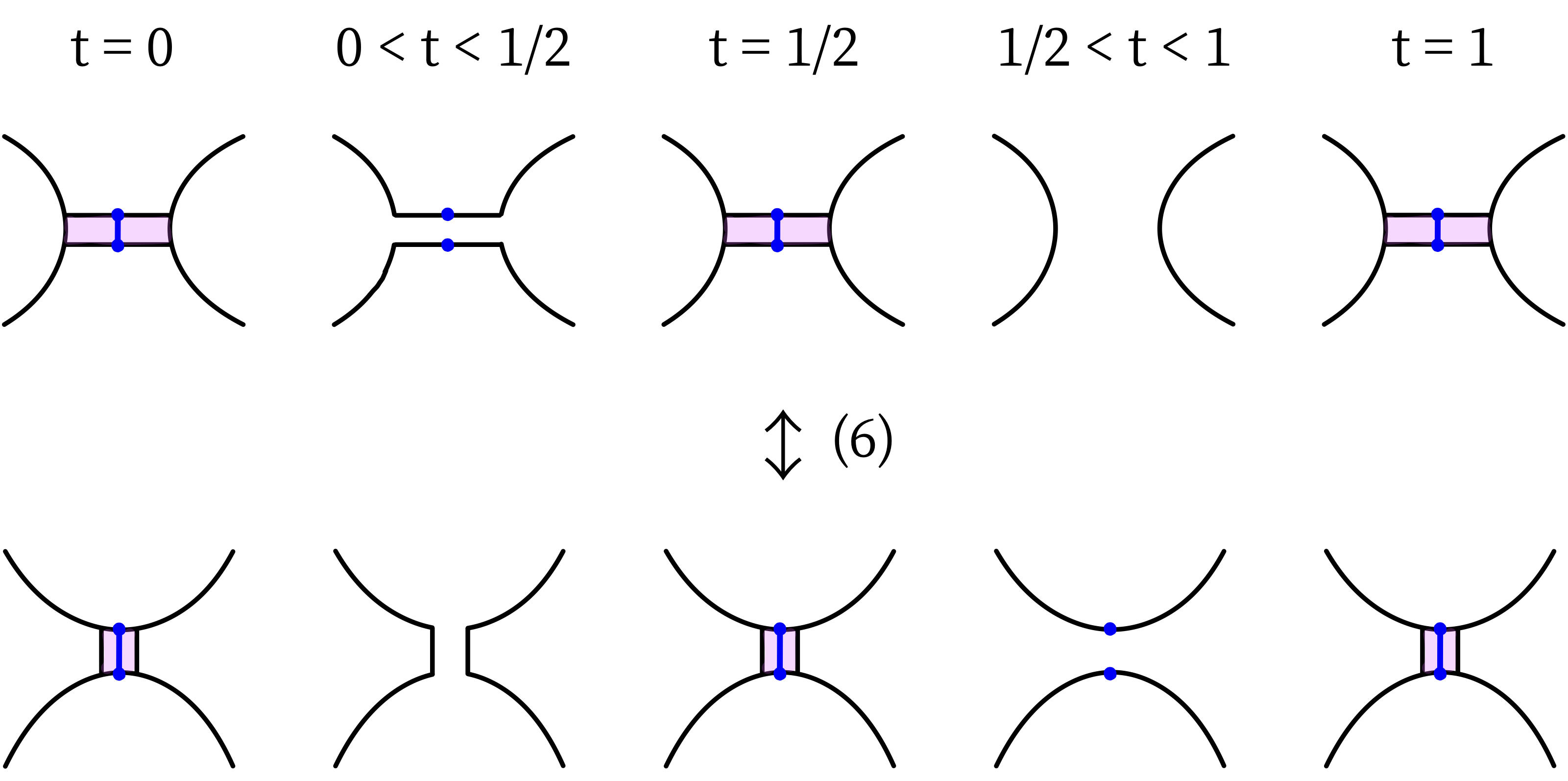}
\caption{Belt sphere change by the operation (6).}
\label{belt sphere(6)}
\end{figure}

We now consider applying other operations after performing the operation (6). The operations (2) and (3) do not cause any problem. As for the operation (4), some care is needed: The upper row of Figure \ref{belt sphere(4)} shows, for each $t\in[0,1]$ in the motion picture in the lower row of Figure \ref{belt sphere(6)}, the result of transferring a full twist from $(T, \mathcal{B})$ to the band. When eliminating a full twist by the operation (4), the entire handle undergoes a twisting motion, which causes the belt sphere to wrap around the $1$-handle. This change is shown as a motion picture in the lower row of Figure \ref{belt sphere(4)}.

\begin{figure}[tbh]
\centering
\includegraphics[width=9cm]{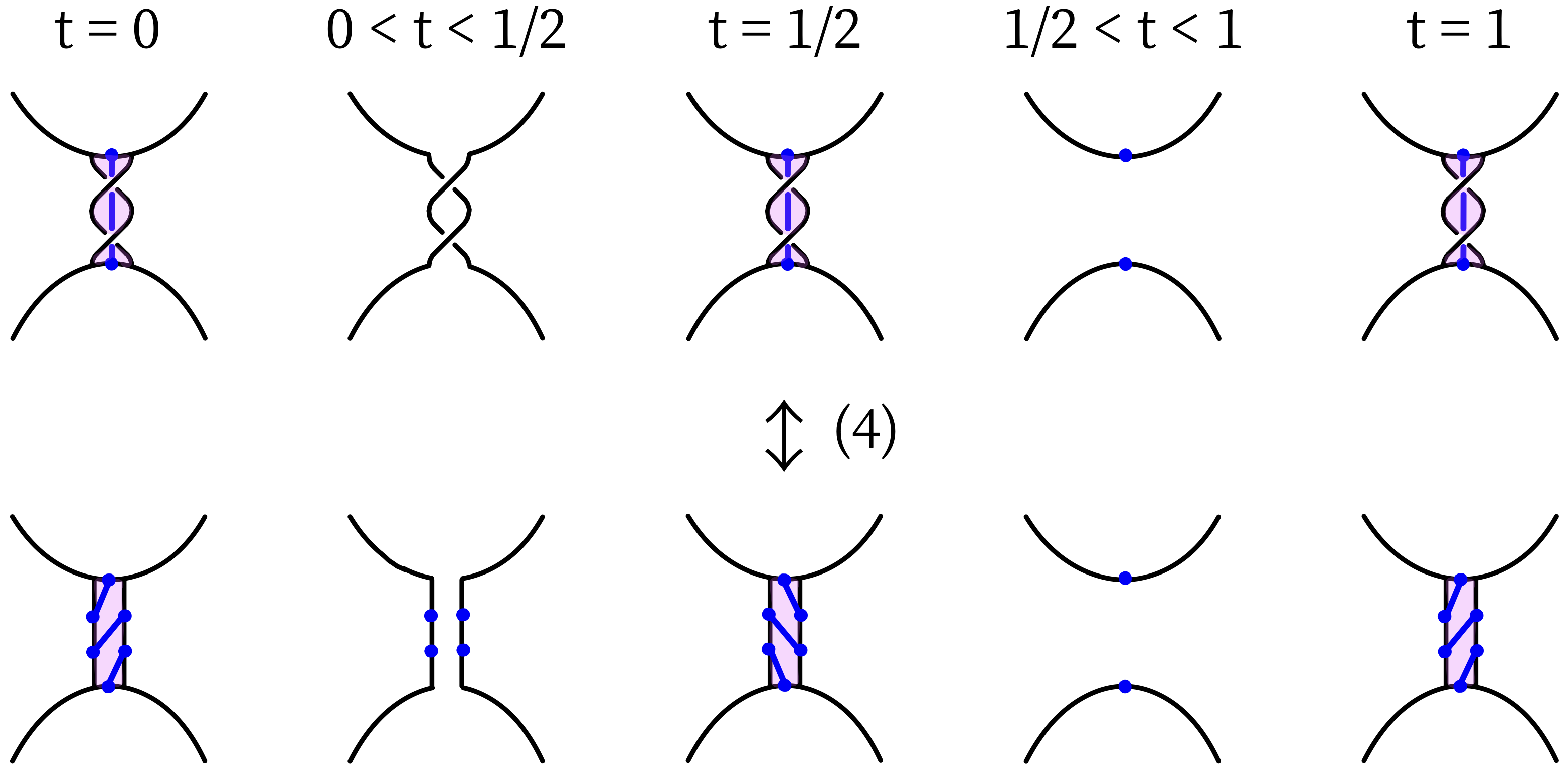}
\caption{Belt sphere change by the operation (4) after (6).}
\label{belt sphere(4)}
\end{figure}

Next, we consider the operations (1) and (5), which involve sliding handles. In these cases, we must take care to ensure that the belt spheres of the interacting handles do not intersect.

In the operation (1), let $h_1$ be the handle being slid and $h_2$ the handle sliding it. 
In the course of the operation, we adjust the belt spheres of $h_1$ and $h_2$ so that they do not intersect as follows: $h_1$ appears for $0\leq t\leq 1/2$, and $h_2$ appears for $1/8\leq t\leq 3/8$. In Figure \ref{belt sphere(1)}, $b_1$ and $b_2$ are the bands corresponding to  $h_1$ and $h_2$ respectively.
In the top row of Figure~\ref{belt sphere(1)}, the belt sphere of $h_i$ ($i=1,2$) appear in the motion picture at each $t\in[0,1]$ as either $b_i(I\times \{1/2\})$ or $b_i(\partial I\times \{1/2\})$.
The middle row of Figure \ref{belt sphere(1)} shows the result of applying the operation (6) to $h_2$ in the top row.
Let $p$ be a point on the subarc of $T$ extending from $b_1(1,1)$, chosen outside the region to which $h_2$ moves.
Denote by $A$ the subarc of $T$ connecting $b_1(1,1)$ and $p$.
We construct the belt sphere of $h_1$ after performing the operation (1) as follows:
\begin{align*}
    &\text{For } t=0,1/2: b_1(I\times\{1/2\})\cup b_1(\{1\}\times[1/2,1])\cup A.\\
    &\text{For } 0<t<1/2: \{b_1(0,1/2)\}\cup\{p\}.
\end{align*}
We construct the belt sphere of $h_2$ after performing the operation (1) as follows.  
For the original position of $b_2$, set $b_2(1/2,0)=q$.
Let $b'_2$ denote the band corresponding to $h_2$ after the operation. Then:
\begin{align*}
    &\text{For } t=1/8,3/8: b'_2(\{1/2\}\times I)\cup\{\text{the arc traced by }b_2(1/2,0)\}.\\
    &\text{For } 0\leq t<1,3/8<t\leq 1: \{q\}\cup \{b'_2(1/2,1)\}.
\end{align*}
This is illustrated in the bottom row of Figure \ref{belt sphere(1)}.
\begin{figure}[tbh]
\centering
\includegraphics[width=12.5cm]{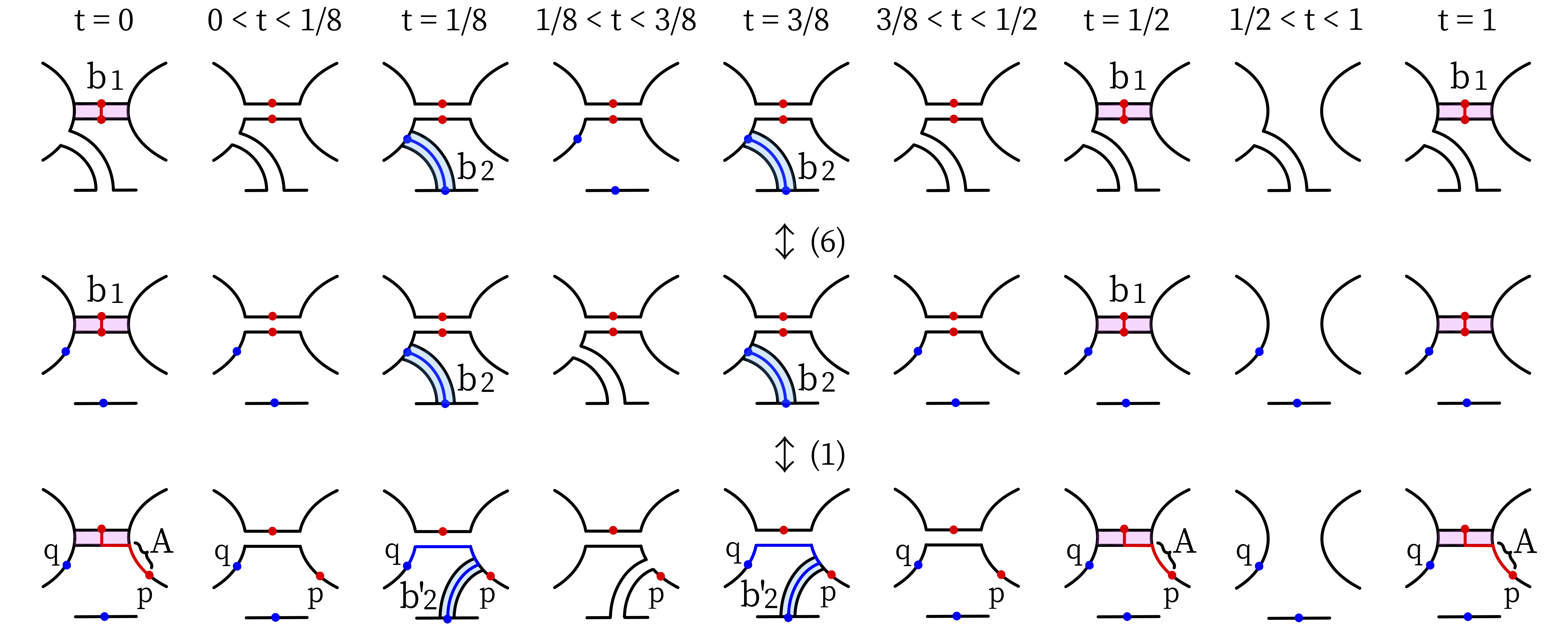}
\caption{Belt sphere change by the operation (1) after (6).}
\label{belt sphere(1)}
\end{figure}

In the course of the operation (5), we adjust the belt spheres of $h_1$ and $h_2$ so that they do not intersect as follows: $h_1$ appears for $0\leq t\leq 1/8$, and $h_2$ appears for $3/8\leq t\leq 1/2$. In Figure \ref{belt sphere(5)}, $b_1$ and $b_2$ are the bands corresponding to  $h_1$ and $h_2$ respectively.
The middle row of Figure \ref{belt sphere(5)} shows the result of applying the operation (6) to $h_1$ and $h_2$ in the top row.
For $i=1,2$, set $b_i(1/2,1)= p_i$ for the original position of $b_i$.
Let $b'_i$ denote the band corresponding to $h_i$ after the operation.
We construct the belt sphere of $h_1$ after performing the operation (5) as follows:
\begin{align*}
    &\text{For } t=0,1/8: b'_1(\{1/2\} \times I )\cup \{\text{the arc traced by }b_1(1/2, 1)\}.\\
    &\text{For } 1/8<t<1: \{b'_1(1/2,0)\}\cup\{p_1\}.
\end{align*}
We construct the belt sphere of $h_2$ after performing the operation (5) as follows:
\begin{align*}
    &\text{For } t=3/8,1/2: b'_2(\{1/2\} \times I )\cup \{\text{the arc traced by }b_2(1/2,1)\}.\\
    &\text{For } 0\leq t<3/8, 1/2<t\leq 1 : \{b'_2(1/2,0)\}\cup\{p_2\}.
\end{align*}
This is illustrated in the bottom row of Figure \ref{belt sphere(5)}.

\begin{figure}[tbh]
\centering
\includegraphics[width=12.5cm]{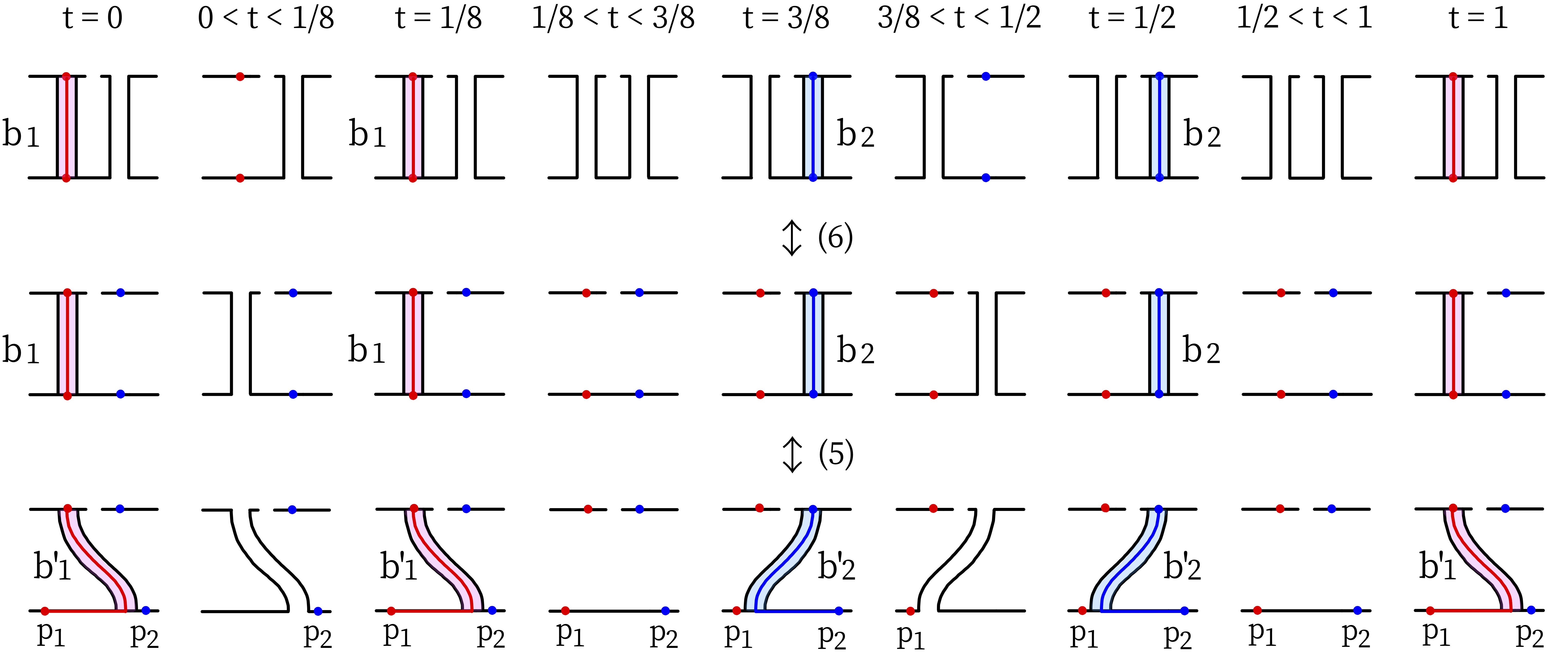}
\caption{Belt sphere change by the operation (5) after (6).}
\label{belt sphere(5)}
\end{figure}

\begin{rem}
As shown in Figure \ref{twisting and rolling}, a full twist on a band can be rewritten so that the band appears to roll along a circle. When two full twists are applied, we can use the operation (3) to switch the over/under crossings of one of the rollings, thereby cancelling the two full twists (or rollings) as a pair.

\begin{figure}[tbh]
\centering
\includegraphics[width=5cm]{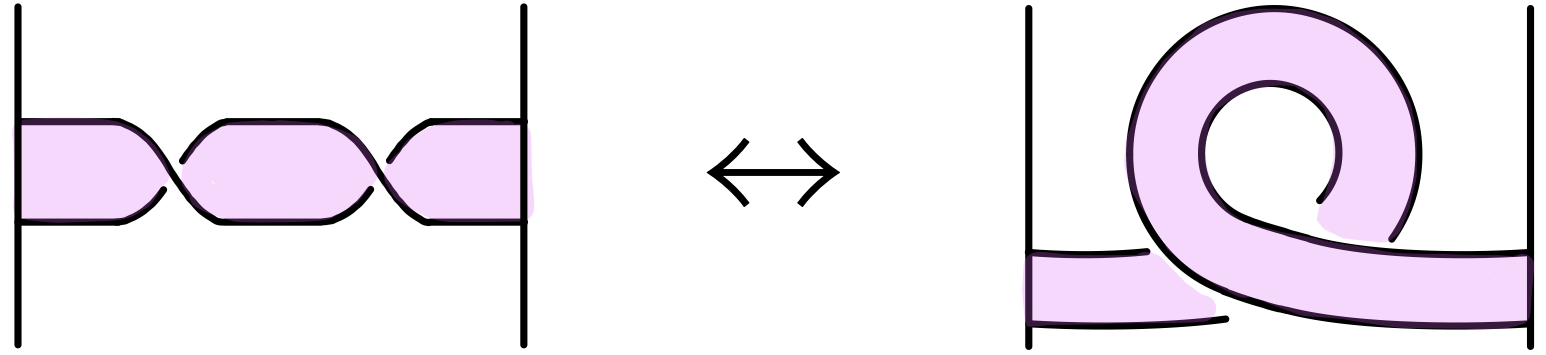}
\caption{Full twist on a band rewritten as a rolling position.}
\label{twisting and rolling}
\end{figure}
\end{rem}

We now describe the setting for Proposition~\ref{sliceprop}.
Let $T_0$ be a trivial tangle, $\mathcal{B}_0=b_1\cup\dots\cup b_m$ be a union of 
$m$ disjoint bands attached to $T_0$, and $S^*_1,\dots,S^*_m$ be $m$ disjoint 
circles on $F^n(T_0,\mathcal{B}_0)$. 
Let a circle $S_1$ be formed by $b_1(\{0\}\times I)$ and the subarc $A_1$ of $T_0$ 
connecting $b_1(0,0)$ and $b_1(0,1)$ as in Figure \ref{delete}.  
\begin{figure}[tbh]
\centering
\includegraphics[width=4cm]{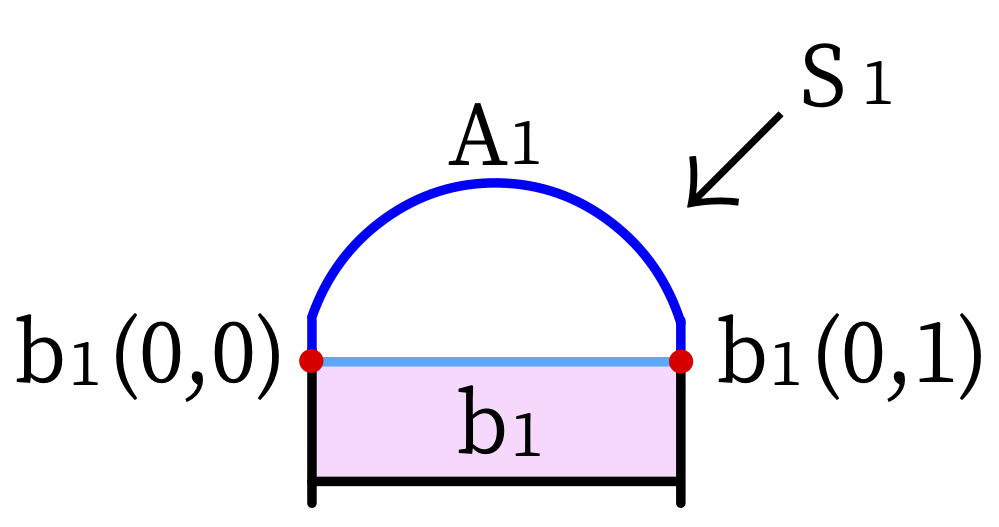}
\caption{$A_1$ (blue) and $b_1({0}\times I)$ (light blue) form the circle $S_1$, connecting the red points $b_1(0,0)$ and $b_1(0,1)$.}
\label{delete}
\end{figure}

\noindent We require the following conditions for $b_1$ and $T_0\cup\mathcal{B}_0$:
\begin{enumerate}
\item No other bands pass through $S_1$.
\item $S^*_1$ appears at a single point on $b_1(\{0\}\times I)$.
\item Removing $b_1([0,1)\times I) \cup A_1$ from $T_0\cup\mathcal{B}_0$ 
results again in the union of a trivial tangle and the remaining bands 
$b_2\cup\dots\cup b_m$.
\end{enumerate}
For each $2\leq i\leq m$, let $A_i$ be a portion of 
$T_0\cup\mathcal{B}_0-\Big(\bigcup_{j=1}^{i-1}b_j([0,1)\times I) \cup A_j\Big)$
that connects $b_i(0,0)$ and $b_i(0,1)$ and Let $S_i$ be a circle formed by $b_i(\{0\}\times I)$ and the $A_i$.
We assume similarly that for each $2\leq i\leq m$, the pair of $b_i$ and 
\begin{equation*}
    T_0\cup\mathcal{B}_0-\Bigg(\bigcup_{j=1}^{i}b_j([0,1)\times I) \cup A_j\Bigg)
\end{equation*}
satisfy the conditions. We further assume that $T_0\cup\mathcal{B}_0-\Big(\bigcup_{j=1}^{m}b_j([0,1)\times I) \cup A_j\Big)$ is the trivial tangle without any bands. The two motion pictures in Figure \ref{(T_0,B_0)} give examples of condition (2), where $S^*_i$ appear as a blue line or a point.

\begin{figure}[tbh]
\centering
\includegraphics[width=11cm]{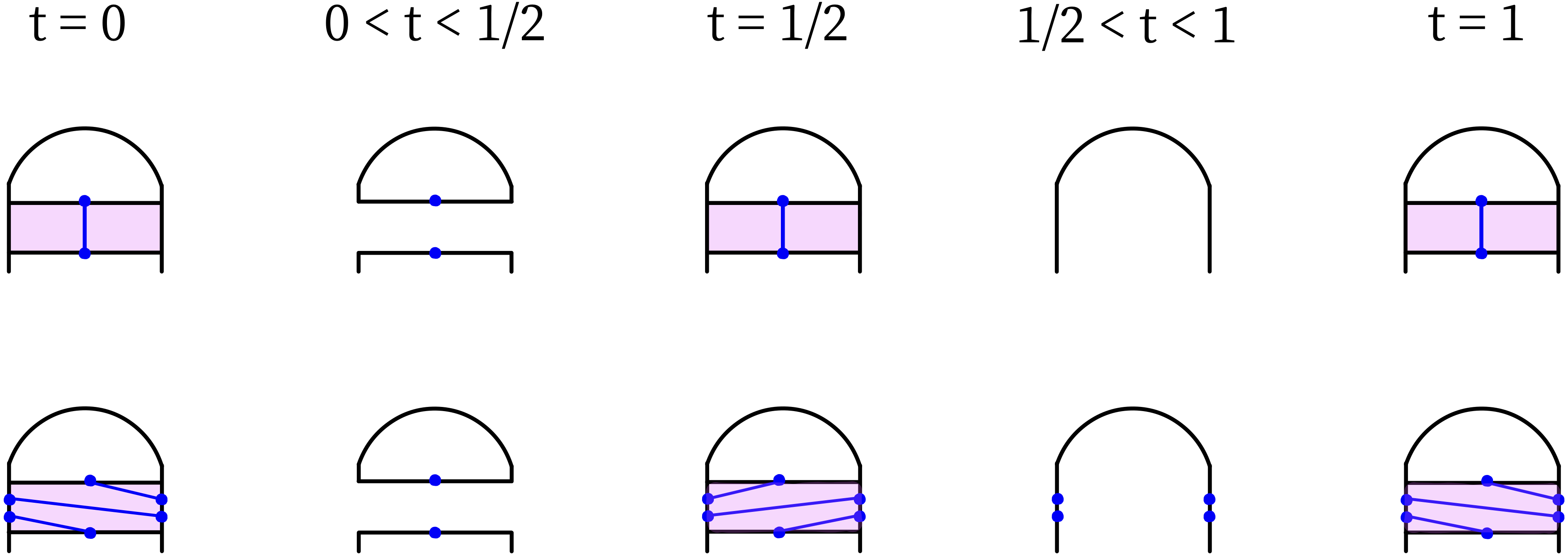}
\caption{Examples of condition (2).}
\label{(T_0,B_0)}
\end{figure}

\begin{prop}\label{sliceprop}
Let $k$ be a classical knot, $T$ be a tangle representing $k$, and $\mathcal{B}$ be a union of $m$ disjoint bands attached to $T$.
Let $(T_0, \mathcal{B}_0)$, $S_1,\dots,S_m$ and $S^*_1,\dots,S^*_m$ be as described in the above setting.
If $(T,B)$ is equivalent to $(T_0,\mathcal{B}_0)$ and the belt spheres of the 
1-handles of $F^n(T,B)$ appear as $S^*_1,\dots,S^*_m$ in $F^n(T_0,\mathcal{B}_0)$, then the $n$-twist spin $\tau^n(k)$ is $m$-slice.
\end{prop}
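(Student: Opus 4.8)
The plan is to build an explicit concordance from $\tau^n(k)$ to a trivial $2$-knot by realizing the data $(T_0,\mathcal{B}_0)$, $S_1,\dots,S_m$, $S^*_1,\dots,S^*_m$ as a handle decomposition of the trace of the concordance. First I would use the hypothesis that $(T,\mathcal{B})$ is equivalent to $(T_0,\mathcal{B}_0)$ together with Satoh's Lemma (Lemma~2.1) to conclude that $F^n(T,\mathcal{B})$ is ambient isotopic to $F^n(T_0,\mathcal{B}_0)$; since the belt spheres are carried to $S^*_1,\dots,S^*_m$ under this isotopy, it suffices to prove the statement with $(T,\mathcal{B})$ replaced by $(T_0,\mathcal{B}_0)$. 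So I would work entirely inside $F^n(T_0,\mathcal{B}_0)$ from this point on.

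Next I would recall that $F^n(T_0)\cup\mathcal{H}_0$ gives an embedded cobordism in $\mathbb{R}^4\times[0,1/2]$ from $F^n(T_0)$ (which is trivial, as $T_0$ is a trivial tangle) to $F^n(T_0,\mathcal{B}_0)$, obtained by attaching the $m$ $1$-handles $h_1,\dots,h_m$ associated with the bands. Turning this cobordism around, I view $F^n(T_0,\mathcal{B}_0)$ as obtained from the trivial $2$-knot $F^n(T_0)$ by $m$ $1$-handle surgeries, and hence the cobordism from $\tau^n(k)$ to the trivial knot it yields has each generic slice of genus at most… well, genus is controlled by how many of the attaching regions are ``simultaneously active.'' The cleanest route is: after possibly isotoping the bands in time so that $h_i$ is attached over the time interval $[(i-1)/m, i/m]$ — so the handles are attached one at a time — every generic level set is the trivial $2$-knot with a single trivial $1$-handle partially attached, hence a genus-at-most-$1$ surface; but to get the bound $m$ I instead keep all handles present and use the belt-sphere data to cancel them.

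Concretely, I would use conditions (1)–(3) on $b_1$ and the circle $S_1$: condition (3) says that surgering out $b_1([0,1)\times I)\cup A_1$ returns a trivial tangle together with the remaining bands, so the belt sphere $S^*_1$ of $h_1$ bounds a $2$-disk $D_1$ in $\mathbb{R}^4$ meeting $F^n(T_0,\mathcal{B}_0)$ transversely in exactly the single point prescribed by condition (2) — this disk is built from the obvious $2$-handle core dual to $h_1$, using that no other band obstructs it (condition (1)). Attaching the corresponding $2$-handle along $S^*_1$ cancels $h_1$ and, by condition (3), the result is again of the form $F^n(\text{trivial},\ b_2\cup\dots\cup b_m)$ satisfying the same hypotheses with $m$ replaced by $m-1$. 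Iterating over $i=2,\dots,m$ — each time invoking the inductive assumption that $b_i$ and the relevant reduced pair satisfy conditions (1)–(3), and that $S^*_i$ meets the reduced surface in one point — I cancel all $m$ $1$-handles in pairs with $2$-handles, arriving at the trivial tangle with no bands, i.e.\ at a trivial $2$-knot. Stacking the cobordism $F^n(T_0)\cup\mathcal{H}_0$ (the $1$-handle traces) on top of the dual $2$-handle traces produces a concordance from $\tau^n(k)=F^n(T_0)=$ (after the isotopy) to the trivial $2$-knot whose generic cross-sections have genus at most $m$: the worst slice is the one where all $m$ $1$-handles are attached but no $2$-handle has yet been attached, namely $F^n(T_0,\mathcal{B}_0)$ itself, which has genus $m$ since it is obtained from a sphere by $m$ $1$-handle surgeries keeping it connected. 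Finally, I would check that this surface-valued movie genuinely traces out a proper embedding $S^2\times[0,1]\hookrightarrow S^4\times[0,1]$ with the two ends the required $2$-knots — this is where one must be careful that the $1$-handle attachments and $2$-handle attachments are disjointly supported in time and that the belt/attaching spheres meet transversely in exactly one point at the cancellation moments, which is exactly what conditions (1)–(3) and the Section~3 tracking of belt spheres have been set up to guarantee.

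I expect the main obstacle to be the verification that the belt sphere $S^*_i$ really bounds a cancelling $2$-disk meeting the (reduced) surface transversely in a single point at each stage — i.e.\ turning the combinatorial conditions (1)–(3), which are phrased about removing $b_i([0,1)\times I)\cup A_i$ from the tangle-plus-bands picture, into an honest geometric cancelling $2$-handle in $S^4\times[0,1]$ compatible with the twist-spinning construction $F^n$. The belt-sphere bookkeeping from Section~3 (the behavior under operations (1)–(6), and especially the shift from $\bigcup_{t\in[0,1/2]}\mathbb{R}^3_+[t]$ to $\bigcup_{t\in[1/2,1]}\mathbb{R}^3_+[t]$ under operation (6)) is precisely what lets one locate $S^*_i$ concretely and see the dual disk; making that rigorous, rather than picture-level, is the crux.
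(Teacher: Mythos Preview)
Your overall strategy matches the paper's: attach $m$ 1-handles to $\tau^n(k)$ to reach $F^n(T,\mathcal{B})\cong F^n(T_0,\mathcal{B}_0)$, then attach $m$ 2-handles found in the $(T_0,\mathcal{B}_0)$ picture, and use the belt-sphere data to show all pairs cancel so the resulting cobordism is $S^2\times I$ with maximal slice genus $m$. Two points in your write-up, however, are wrong as stated and would break the argument.

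First, the 1-handle half of the cobordism must begin at $F^n(T)=\tau^n(k)$, not at $F^n(T_0)$. You write ``$\tau^n(k)=F^n(T_0)$'' and take the 1-handle traces to be $F^n(T_0)\cup\mathcal{H}_0$; but $F^n(T_0)$ is already the trivial 2-knot, so a cobordism starting there says nothing about $\tau^n(k)$. Lemma~2.1 only identifies the \emph{surgered} surfaces $F^n(T,\mathcal{B})$ and $F^n(T_0,\mathcal{B}_0)$; it does not identify $F^n(T)$ with $F^n(T_0)$. The paper deals with this by pulling the disks $D_i$ back along the inverse isotopy to disks $\tilde D_i$ and attaching the 2-handles $h^2_i$ directly to $F^n(T,\mathcal{B})$. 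Second, the cancelling disk $D_i$ is bounded by $S_i=b_i(\{0\}\times I)\cup A_i$, not by the belt sphere $S^*_i$. Condition~(2) in the setup is precisely the statement that $S^*_i$ meets $b_i(\{0\}\times I)\subset S_i$ in a single point, i.e.\ that the attaching circle $\partial D_i=S_i$ of $h^2_i$ meets the belt sphere $S^*_i$ of $h^1_i$ transversely once; this is the cancelling-pair condition. Saying that $S^*_1$ itself bounds $D_1$ conflates the belt sphere with the attaching sphere and leaves you without a mechanism for the cancellation.
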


\begin{proof}
    If $(T, \mathcal{B})$ is equivalent to $(T_0, \mathcal{B}_0)$, then $F^n(T, \mathcal{B})$ is ambient isotopic to $F^n(T_0, \mathcal{B}_0)$ by Lemma 2.1.
    Under this isotopy, the circles corresponding to the belt spheres of the 1-handles of $F^n(T, \mathcal{B})$ are carried to those of $F^n(T_0, \mathcal{B}_0)$ as $S^*_1,\dots,S^*_m$.
    We now take the 1-handle $h_i^1$ of $F^n(T, \mathcal{B})$ whose belt sphere is the circle obtained by pulling back $S^*_i$ to $F^n(T, \mathcal{B})$ via the inverse of the isotopy. 
    Let $D_i$ be the disk in $(T_0, \mathcal{B}_0)$ whose boundary is $S_i$. Let $\tilde{D}_i$ be the image of $D_i$ in $(T, \mathcal{B})$ under the inverse of the isotopy. Let $h^2_i$ be the 2-handle of $F^n(T,\mathcal{B})$ whose core is  $\tilde{D}_i$. Then, since the belt sphere of $h^1_1$ intersects $h^2_1$ transversely in a single point, the pair $(h_1^1,h_1^2)$  forms a cancelling pair.
    After cancelling $h^1_1$ and $h^2_1$ we proceed similarly by attaching $h^2_i$ to each
    $h^1_i$ for each $2\leq i \leq m$, cancelling them one by one. 
    As a result, all the 1-handles of $F^n(T, \mathcal{B})$ can be cancelled, and we obtain a trivial 2-knot in $S^4$.
Thus, there exists a cobordism $X$ in $S^4\times I$  whose boundary is $F^n(T)\sqcup \text{(trivial 2-knot)}$. By handle decomposition, we have
  \begin{equation*}
      X=(S^2\times I)\cup h^1_1\cup\dots\cup h^1_m\cup h^2_1\cup\dots\cup h^2_m
  \end{equation*}
Since all the handles can be cancelled, $X$ is diffeomorphic to $S^2\times I$. Therefore, this cobordism $X$ is a concordance between $F^n(T)$ and a trivial $2$-knot.
The maximal genus of components of the closed orientable surface $X\cap(S^4\times \{t\})$ for all generic $t\in[0,1]$ is equal to $m$, and thus $X$ is a $m$-concordance.
Hence, $F^n(T)=\tau^n(k)$ is $m$-slice.
\end{proof}


\section{Main Results}
\label{Main Results}


In this section, we provide upper bounds for the slice depth of the $n$-twist spin of certain classes of classical knots.

\subsection{2-bridge knots}
A $2$-bridge knot is a classical knot in $\mathbb{R}^3$ that can be arranged to have exactly two local maxima with respect to the $z$-axis. Every 2-bridge knot can be represented by a sequence of nonzero integers $(a_1,\dots a_m)$ and it is denoted by $C(a_1, \dots, a_m)$. It is possible to associate a continued fraction with the integer sequence  $(a_1,\dots a_m)$ corresponding to a 2-bridge knot.
Since any $2$-bridge knot admits a continued fraction expansion whose coefficients are all even, we adopt such an expansion throughout this paper and use the corresponding diagram.
For details on 2-bridge knots, see Kawauchi \cite{Kawauchi}.

\begin{thm}\label{THM2-bridge}
    For any positive integer $n$, the $n$-twist spin of the 2-bridge knot 
    $C(a_1, \dots, a_m)$ is 1-slice if the following condition is satisfied.
    Let $(b_1,\dots,b_m)$ be the sequence obtained by reducing each $a_i$ modulo $4$.
    From this sequence, omit every term with $b_i=0$; if $b_i=2$, replace it with $O$ when $i$ is odd, and with $E$ when $i$ is even. This process yields a word $w_k$ of length $k$ $(0 \leq k \leq m)$ in the letters $O$ and $E$.
    If $w_k$ reduces to either the empty word or the single letter $O$ by repeatedly deleting the initial $E$ or adjacent pairs $OO$ or $EE$, then the $n$-twist spin is 1-slice.
    \end{thm}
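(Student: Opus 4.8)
The plan is to realize the $n$-twist spin $\tau^n(C(a_1,\dots,a_m))$ as $F^n(T)$ for a suitable tangle $T$ representing the $2$-bridge knot drawn from its all-even continued fraction expansion, attach a single band $b$ (so $m=1$ in the language of Proposition~\ref{sliceprop}) realizing a fusion to the trivial tangle, and then verify that after applying the operations (1)--(6) — crucially, one use of operation (6) to push the band's $1$-handle into $\bigcup_{t\in[1/2,1]}\mathbb{R}^3_+[t]$ — the belt sphere of the resulting $1$-handle can be made to appear as the circle $S^*_1$ of the model pair $(T_0,\mathcal{B}_0)$. Then Proposition~\ref{sliceprop} immediately gives $\operatorname{sd}(\tau^n(k))\leq 1$.

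The first step is the band construction. Starting from the standard $2$-bridge diagram built from $(a_1,\dots,a_m)$ with all $a_i$ even, I would attach one band so that $T\cup b$ becomes (equivalent to) a trivial tangle: this is essentially the observation that a $2$-bridge knot has fusion number one. Reducing $a_i$ modulo $4$ records exactly the part of the twisting that survives after using the Remark (a pair of full twists on the band can be cancelled via operation (3), rewriting each full twist as a rolling and switching a crossing), so each block of $a_i$ half-twists contributes, after operation (4)-type moves that transfer twists onto the band, a residue of $0$, $1$, $2$, or $3$ half-twists; residue $0$ contributes nothing (hence omit $b_i=0$), residues $1$ and $3$ differ from $2$ and $0$ respectively by the cancellable pair, and residue $2$ is the interesting case. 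The parity label $O$ versus $E$ according to whether $i$ is odd or even records on which "side" of the two-bridge presentation the leftover half-twist sits, i.e. whether it lies on a horizontal or vertical tangle slot; this is what governs whether the leftover twisting can be slid off the band or must be carried past the other leftover twists by handle slides (operations (1) and (5)).

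The heart of the proof is the reduction rule on the word $w_k$: deleting an initial $E$, or deleting an adjacent pair $OO$ or $EE$, must each be realized by an honest sequence of operations (1)--(6) on $(T,b)$ that, by the analysis of Section~3 (Figures~\ref{belt sphere(1)}, \ref{belt sphere(4)}, \ref{belt sphere(5)}), keeps the belt sphere of the single $1$-handle in the model form $S^*_1$ pictured in Figure~\ref{(T_0,B_0)}. Deleting an initial $E$ corresponds to absorbing a leftover half-twist that sits at the "outermost even slot" directly into the band where it can be undone (an operation (4) after (6), which merely wraps the belt sphere around the $1$-handle without creating intersections). Deleting $EE$ or $OO$ corresponds to cancelling two leftover half-twists against each other: rewrite each as a rolling (the Remark), slide one rolling past the other using operation (1) or (5) — whose belt-sphere bookkeeping we arranged in Section~3 precisely so the two interacting belt data never intersect — and then remove the rolling pair by operation (3). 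When $w_k$ reduces to the empty word, $(T,b)$ has been shown equivalent to $(T_0,\mathcal{B}_0)$ with $(T_0,\mathcal{B}_0)$ literally trivial after removing $b([0,1)\times I)\cup A_1$; when it reduces to the single letter $O$, the one surviving half-twist sits in an odd slot, and one checks directly (a small explicit picture, the left motion picture in Figure~\ref{(T_0,B_0)}) that this still satisfies conditions (1)--(3) of the Proposition's setting with the belt sphere appearing as $S^*_1$.

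The main obstacle I anticipate is the belt-sphere bookkeeping during the handle slides: ensuring that for every admissible reduction the circle $S^*_1$ stays in the prescribed form throughout, in particular that no band passes through $S_1$ (condition (1)) after the slides and that $S^*_1$ still meets $b_1(\{0\}\times I)$ in a single point (condition (2)). This is exactly where the asymmetry between $O$ and $E$, and the asymmetry of the reduction rule (initial $E$ deletable but initial $O$ not), must be shown to match the geometry — i.e. that an $O$ at the start genuinely obstructs while an $E$ at the start does not, because of the order in which $h_1$ (appearing for $0\le t\le 1/2$ or $0\le t\le 1/8$) and $h_2$ (appearing for $1/8\le t\le 3/8$ or $3/8\le t\le 1/2$) are positioned after operation (6). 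I would organize this as an induction on $k=|w_k|$, each step invoking the appropriate one of Figures~\ref{belt sphere(1)}--\ref{belt sphere(5)}, and close by applying Proposition~\ref{sliceprop} with $m=1$.
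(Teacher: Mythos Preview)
Your overall strategy matches the paper's: attach a single band to the standard all-even tangle for $C(a_1,\dots,a_m)$, simplify $(T,b)$ via the operations of Lemma~2.1 while tracking the belt sphere, and invoke Proposition~\ref{sliceprop} with one handle. But the mechanism you give for the two reduction rules is not right, and that mechanism is essentially the entire content of the proof.

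For the $OO$/$EE$ cancellation you propose to rewrite the two leftover twists as rollings and ``slide one rolling past the other using operation (1) or (5).'' Operations (1) and (5) are slides involving \emph{two distinct bands} (the two $1$-handles $h_1,h_2$ in Figures~\ref{belt sphere(1)} and~\ref{belt sphere(5)}); with only one band present they are simply inapplicable. The paper instead processes the $a_i$ \emph{sequentially}: apply operation~(6) once per index, alternating the band between the vertical (odd~$i$) and horizontal (even~$i$) slots and transferring the $a_i$ half-twists onto the band each time. A residue of~$2$ leaves one full twist after~(3); removing it with~(4) winds the belt sphere once around the $1$-handle. The $OO$ (resp.\ $EE$) cancellation is achieved by using~(3) at the \emph{second} occurrence to reverse the sign of that full twist, so that the winding produced by its~(4) undoes the first. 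Your account of the initial-$E$ rule is likewise off: the point is not that the resulting wrap ``doesn't create intersections,'' but that when the first residue~$2$ sits at an even index the case~(ii) analysis shows the belt sphere is left in its original vertical form, exactly as if that term were absent --- this, not any handle-slide bookkeeping, is the geometric source of the $O$/$E$ asymmetry you flagged. (A minor slip: since every $a_i$ is even, the residues mod~$4$ are only $0$ or~$2$, and each leftover is a full twist; your residues $1,3$ and ``leftover half-twists'' never arise.)
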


\begin{proof}
    Since all $a_1, \dots, a_m$ are even, $m$ must be even; otherwise, $C(a_1, \dots, a_m)$ represents a link rather than a knot. Figure \ref{2-bridge knot} represents $C(a_1, \dots, a_m)$ and its corresponding tangle with an appropriately attached single band. Each box with an odd index $i$ indicates $a_i$ positive half-twists, while each box with an even index $i$ represents $a_i$ negative half-twists.

    \begin{figure}[tbh]
\centering
\includegraphics[width=8cm]{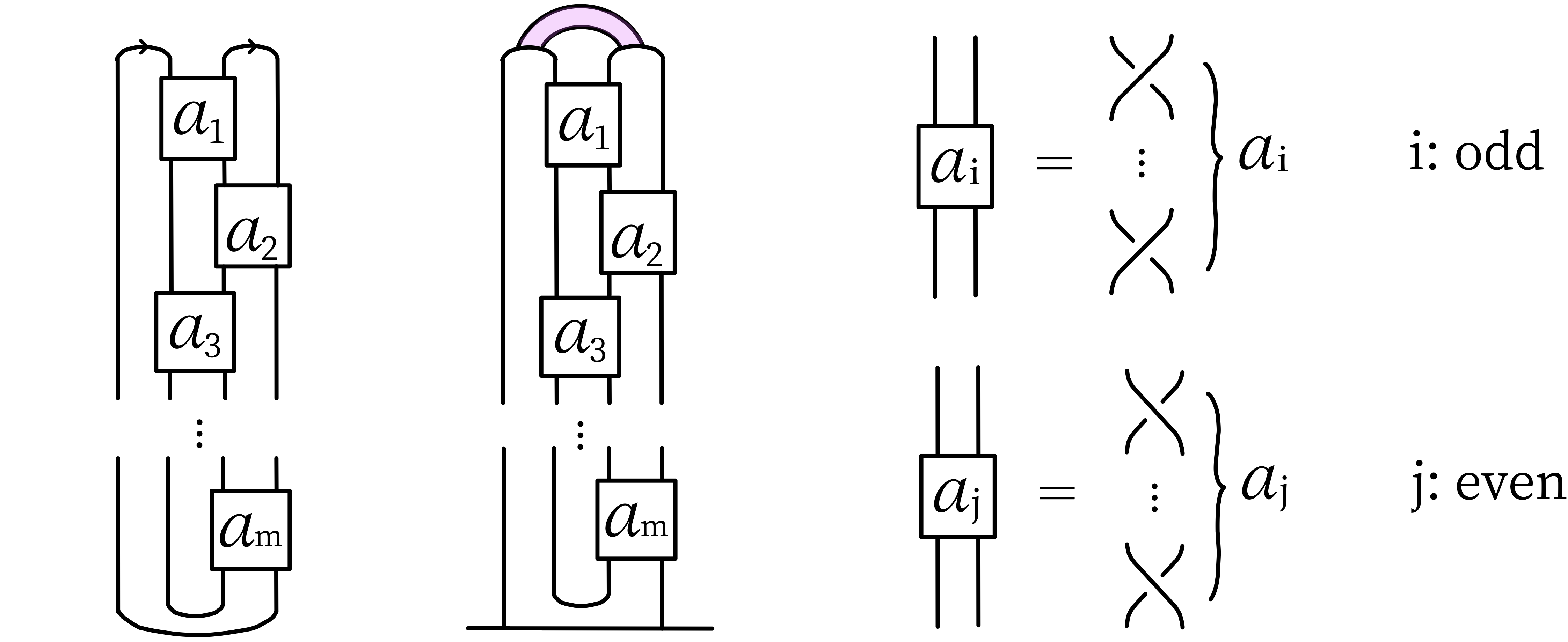}
\caption{$C(a_1, \dots, a_m)$ and the corresponding banded tangle.}
\label{2-bridge knot}
\end{figure}

    We first consider the cases of $C(a_1, a_2)$ when $(a_1,a_2)\equiv(0,0),(0,2),(2,0),(2,2) \pmod4$.

    (i) When $(a_1,a_2)\equiv(0,0) \pmod4$: 
     Applying the operation (6) to the band makes it vertical and transfers $a_1$ twists onto it. Since $a_1\equiv0$ $\pmod4$, these twists can be eliminated by the operation (3), as explained in Remark 3.1. Applying the operation (6) once more makes the band horizontal, so that $a_2$ twists are transferred onto it, which can again be eliminated by the operation (3) since $a_2\equiv0\pmod4$. Consequently, the tangle becomes trivial, the band is attached without twists, and the belt sphere is vertical.

    (ii) When $(a_1,a_2)\equiv(0,2) \pmod4$: 
    Applying the operation (6) to the band makes it vertical and transfers $a_1$ twists onto it. Since $a_1\equiv0\pmod4$, the twists can be eliminated by the operation (3). Performing the operation (6) again makes the band horizontal, and $a_2$ twists are transferred onto it. Since $a_2\equiv2\pmod4$, eliminating the twists by the operation (3) leaves one full twist. This twist can be removed by the operation (4). Hence the tangle becomes trivial, the band remains untwisted, and the belt sphere is vertical.

    Figure \ref{2-bridge transform1} shows the motion pictures before and after the deformations (i) and (ii).

 \begin{figure}[tbh]
\centering
\includegraphics[width=10cm]{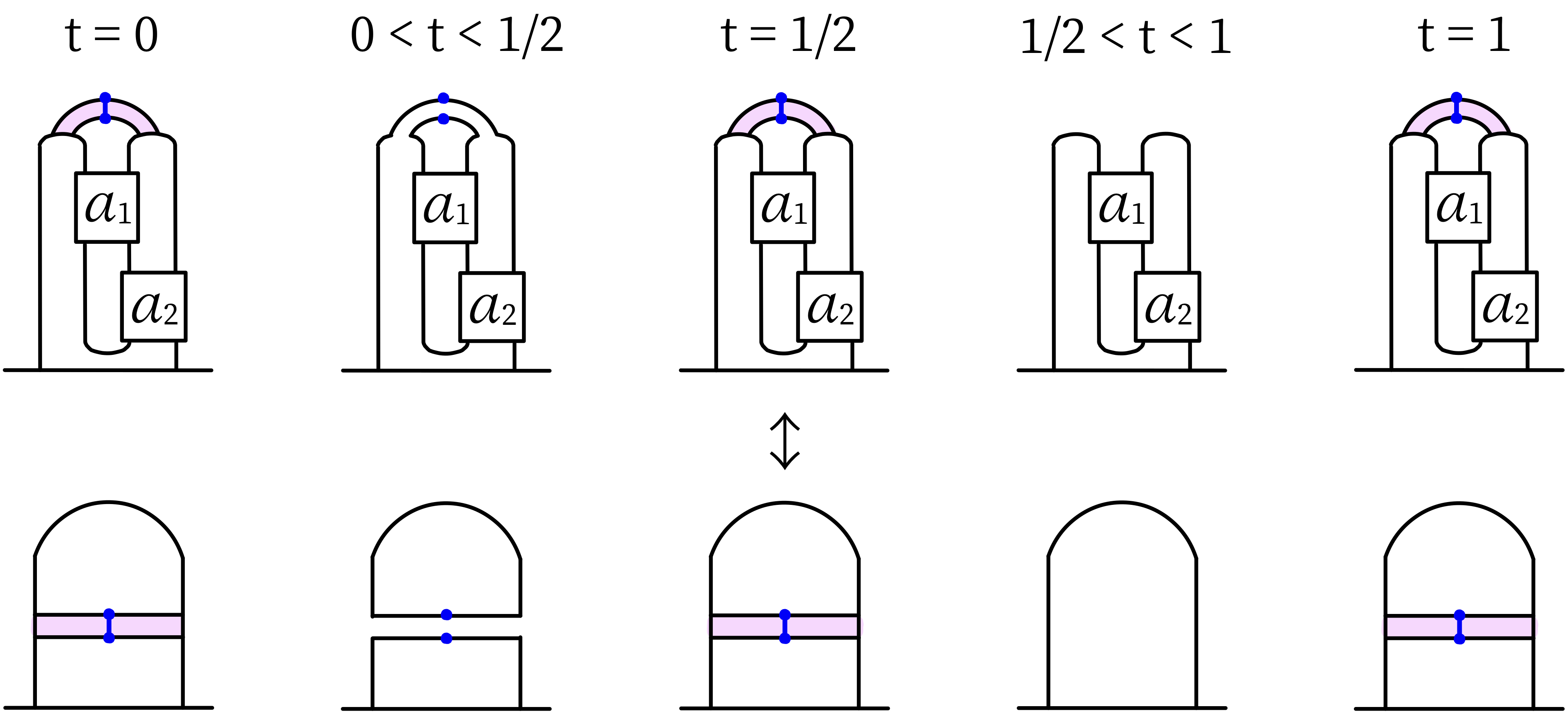}
\caption{Before and after deformations (i) and (ii).}
\label{2-bridge transform1}
\end{figure}

(iii) When $(a_1,a_2)\equiv(2,0) \pmod4$: 
Applying the operation (6) to the band makes it vertical and transfers $a_1$ twists onto it. Because $a_1\equiv2\pmod4$, eliminating the twists by the operation (3) leaves one full twist. This twist can be removed by the operation (4), during which the belt sphere winds once around the 1-handle. Applying the operation (6) again makes the band horizontal and transfers  $a_2$ twists onto it; since $a_2\equiv0\pmod4$, these twists can be eliminated by the operation (3). The resulting tangle is trivial, and the band is attached without twists.
 Figure \ref{2-bridge transform2} shows the motion pictures before and after the deformations (iii).
 The winding of the belt sphere around the 1-handle can be removed if the next index $i$ with $a_i\equiv2\pmod4$ is odd, by applying the operation (3) to reverse the full twist corresponding to $a_i$. 
 \begin{figure}[tbh]
\centering
\includegraphics[width=10cm]{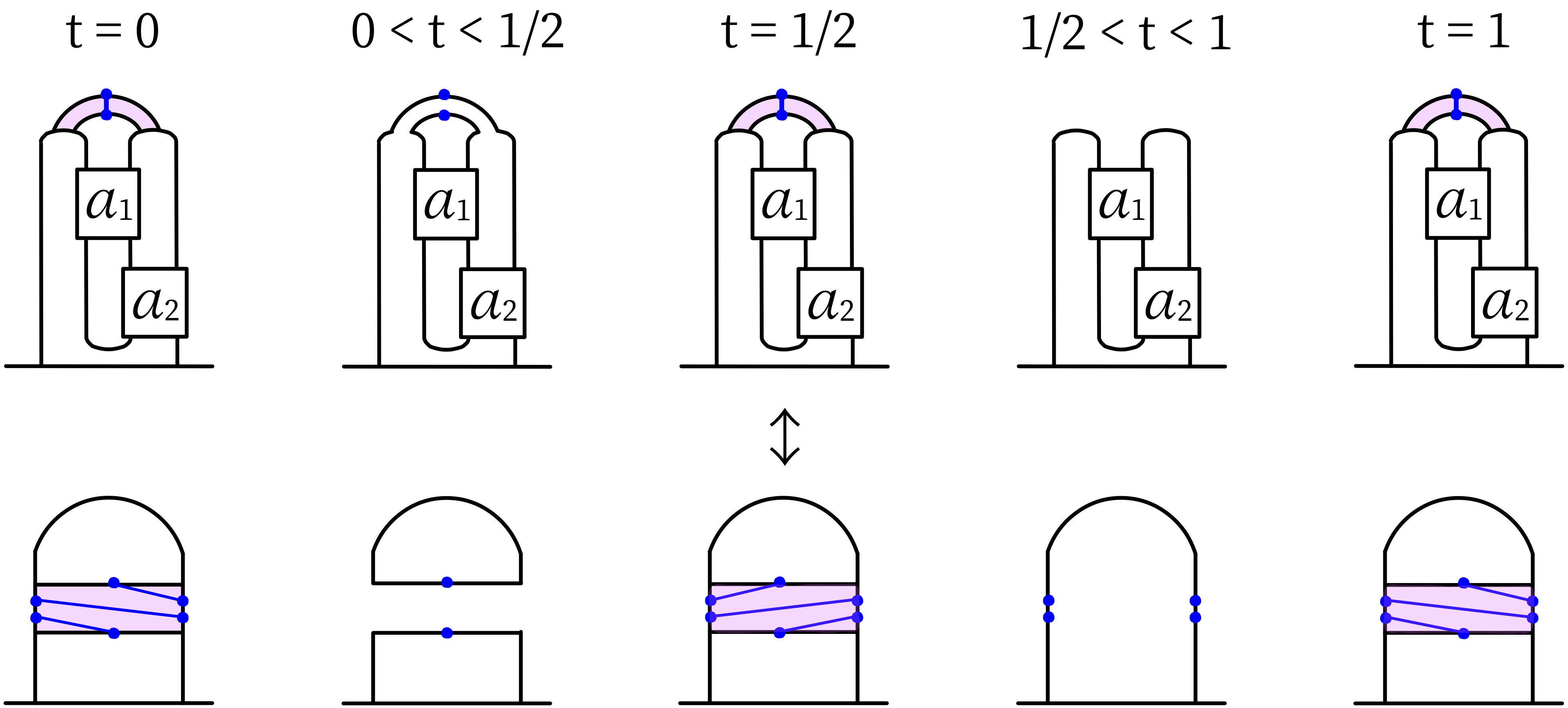}
\caption{Before and after deformation (iii).}
\label{2-bridge transform2}
\end{figure} 

(iv) When $(a_1,a_2)\equiv(2,2) \pmod4$: 
Applying the operation (6) to the band makes it vertical and transfers $a_1$ twists onto it. Since $a_1\equiv2\pmod4$, eliminating the twists by the operation (3) leaves one full twist. This twist can be removed by the operation (4), during which the belt sphere winds once around the 1-handle. Performing the operation (6) again makes the band horizontal, and $a_2$ twists are transferred onto it. As $a_2\equiv2\pmod4$, eliminating these twists by the operation (3) leaves one full twist. This twist can be removed by the operation (4). In trivializing the tangle, the belt sphere becomes wound once around the 1-handle in a direction perpendicular to that in the previous case. 
 Figure \ref{2-bridge transform3} shows the motion pictures before and after the deformations (iv).
\begin{figure}[tbh]
\centering
\includegraphics[width=10cm]{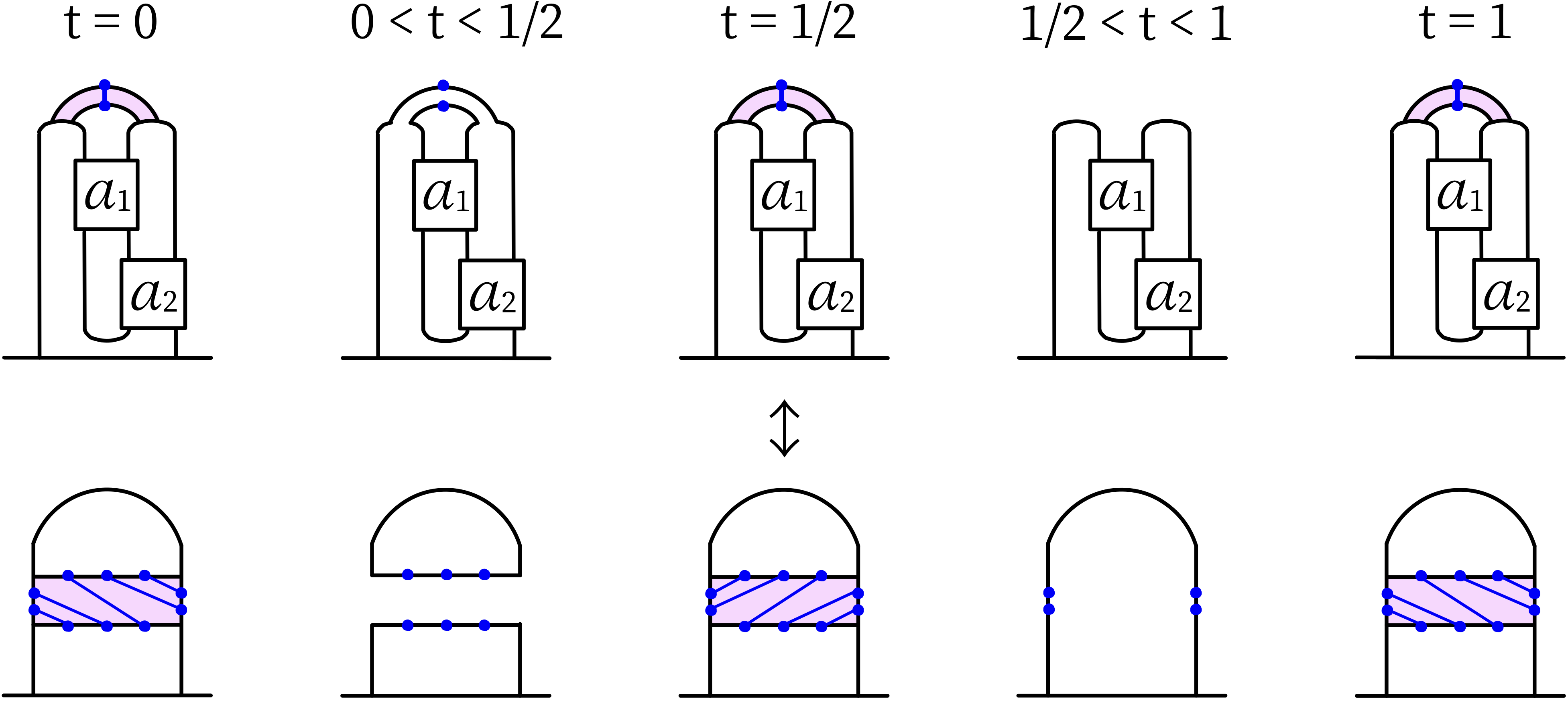}
\caption{Before and after deformation (iv).}
\label{2-bridge transform3}
\end{figure}

Since in cases (i), (ii), and (iii), the motion pictures after the deformations show that the belt sphere of the 1-handle corresponding to the band appears as a single point on both the upper and lower edges of the band, the conditions of Proposition 3.2 are satisfied.
However, when the belt sphere winds around the 1-handle as in the deformation (iv), the motion pictures after the deformation show that the belt sphere of the 1-handle corresponding to the band appears as three points on both the upper and lower edges of the band. In this situation, it is impossible to attach a 2-handle whose attaching sphere intersects the belt sphere transversely in a single point.
Therefore, the conditions of Proposition 3.2 are not satisfied in this form. 
The twist of the belt sphere created in resolving $a_2$ can be eliminated if the next $a_i\equiv2\pmod4$ occurs in an even position, by applying the operation (3) to reverse the full twist of $a_i$. Likewise, the twist of the belt sphere introduced when resolving 
$a_1$ can be removed—after cancelling the twist from 
$a_2$ —if the next $a_j\equiv2\pmod4$ occurs in an odd position, again by applying the operation (3) to reverse the corresponding full twist.

Since all of $a_1,a_2,\ldots,a_m$ are even, the sequence modulo $4$ consists only of $0$ and $2$. 
The terms equal to $0$ do not affect the belt sphere when eliminating crossings, so it suffices to consider the terms equal to $2$. 

Let $(b_1,b_2,\ldots,b_m)$ be the sequence obtained by reducing each $a_i$ modulo $4$. 
For each $b_i$, omit it if $b_i=0$, replace it with $O$ if $b_i=2$ with $i$ odd, and with $E$ if $b_i=2$ with $i$ even. This yields a word $w_k$ of length $k$ ($0 \leq k \leq m$) in the letters $O$ and $E$.

For the sequence $(b_1,b_2,\ldots,b_m)$, suppose that 
$b_1,b_2,\ldots,b_{i-1}$ are all equal to 0 and $b_i=2$ with $i$ even.
Then, using the deformation (i), we can eliminate $b_1,b_2,\ldots,b_{i-2}$. 
Moreover, $b_{i-1}$ and $b_i$ can be eliminated by using the deformation (ii).
Since the deformations (i) and (ii) do not introduce any twist in the belt sphere, $C(b_1,\ldots,b_m)$ can be deformed into $C(b_{i+1},\ldots,b_m)$.
This deformation corresponds to deleting the initial $E$ of the word $w_k$.

For the sequence $(b_1,b_2,\ldots,b_m)$, suppose there exist $i<j$ (both odd or both even) such that $b_i=b_j=2$ and $b_{i+1}=\dots=b_{j-1}=0$.
In this case, even if the belt sphere becomes twisted at $b_i$, the twist is cancelled when eliminating the crossing corresponding to $b_j$.
Thus, $C(b_1,\ldots,b_{i-1},b_i,0,\dots,0,b_j,b_{j+1},\dots,b_m)$ can be deformed into $C(b_1,\ldots,b_{i-1},0,b_{j+1},\dots,b_m)$.
This deformation corresponds to deleting the adjacent pair $OO$ or $EE$ in the word $w_k$.

The word $w_k$ can be shortened by repeatedly deleting the initial $E$ or deleting adjacent pairs $OO$ or $EE$.
If the word reduces to the empty word, $C(b_1,\ldots,b_m)$ can be deformed into $C(0,0)$ or $C(0,2)$. 
If the word reduces to $O$, $C(b_1,\ldots,b_m)$ can be deformed into $C(2,0)$. 
Therefore, if the word reduces to either the empty word or $O$, the corresponding 2-bridge knot tangle can be transformed into $(T_0,B_0)$. 
By Proposition~3.2, it follows that for any $n$, the $n$-twist spin of such a 2-bridge knot is 1-slice.

\end{proof}

\begin{cor}\label{2-bridgeCor}
    The slice depth of the 2-twist spin of a 2-bridge knot that satisfies the condition of Theorem 4.1 is equal to $1$.
\end{cor}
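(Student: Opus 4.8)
The plan is to sandwich $\operatorname{sd}(\tau^2(k))$ between $1$ and $1$. The upper bound $\operatorname{sd}(\tau^2(k)) \le 1$ is immediate: it is precisely the conclusion of Theorem \ref{THM2-bridge} specialized to $n = 2$, which states that $\tau^2(k)$ is $1$-slice. Since the slice depth is a non-negative integer, the corollary is therefore equivalent to the single assertion $\operatorname{sd}(\tau^2(k)) \neq 0$, i.e. that $\tau^2(k)$ is \emph{not} $0$-slice (not $0$-concordant to a trivial $2$-knot). So the entire content of the statement is a lower bound, which must come from outside the machinery of Sections 2 and 3, since that machinery only ever produces upper bounds.

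For the lower bound I would invoke Corollary 1.2 of \cite{DM}, which detects that suitable $2$-twist spins represent non-trivial classes in the $0$-concordance monoid of $2$-knots. The step to carry out is to check that every $2$-bridge knot $k$ satisfying the combinatorial hypothesis of Theorem \ref{THM2-bridge} lies within the scope of that result. First I would confirm that such a $k$ is a genuine, non-trivial knot: since the continued-fraction coefficients $a_1, \dots, a_m$ are all even and nonzero, $m$ is even and the numerator $p$ of the associated fraction $p/q$ satisfies $|p| \ge |a_1| \ge 2$ by an easy induction on the convergents (using $|a_i| \ge 2$); hence $\det(k) = |p| > 1$, so $k$ is non-trivial and $\Sigma_2(S^3, k)$ is the lens space $L(p,q)$. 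Then I would verify that the obstruction recorded in \cite{DM} is non-vanishing for this family, so that $\tau^2(k)$ is not $0$-slice; together with the upper bound this yields $\operatorname{sd}(\tau^2(k)) = 1$.

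The main obstacle is this matching step — ensuring that the rather intricate class of $2$-bridge knots carved out by the letter-deletion condition of Theorem \ref{THM2-bridge} is genuinely contained in the class to which \cite{DM} applies, so that no knot satisfying the hypothesis slips past the non-$0$-sliceness conclusion. A related point worth flagging is that the obstruction of \cite{DM} is built from the $\mathbb{Z}/2$ branched double cover and so is adapted to $\tau^2$ in particular; this is why the equality $\operatorname{sd} = 1$ is claimed here only for the $2$-twist spin, even though Theorem \ref{THM2-bridge} supplies the upper bound $\operatorname{sd}(\tau^n(k)) \le 1$ for every $n \ge 1$.
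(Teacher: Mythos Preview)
Your structural plan matches the paper exactly: the upper bound is Theorem~\ref{THM2-bridge} with $n=2$, and the lower bound comes from an external $0$-concordance obstruction showing $\tau^2(k)$ is not $0$-slice. The paper even carries out the same determinant check you sketch, writing the continued fraction $p/q$, noting $p$ odd, $q$ even, $\det(k)=p>1$.

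The one substantive difference is the choice of obstruction. You invoke Corollary~1.2 of \cite{DM}, and you correctly flag as the ``main obstacle'' the need to match the combinatorial class of Theorem~\ref{THM2-bridge} against the scope of \cite{DM}. The paper sidesteps this entirely by citing instead \cite[Corollary~1.4]{Joseph}, whose hypothesis is simply $|\Delta_k(-1)|=\det(k)\neq 1$. Since \emph{every} $2$-bridge knot $C(a_1,\dots,a_m)$ with nonzero even $a_i$ has $\det(k)=p>1$, Joseph's criterion applies uniformly and no matching step is needed. So your identified obstacle is real for the route you chose, but it dissolves once you swap \cite{DM} for \cite{Joseph}; with that substitution your proof is complete and coincides with the paper's. (The introduction of the paper does advertise \cite{DM}, but the written proof of the corollary relies on \cite{Joseph}.)
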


\begin{proof}
    Let $k=C(a_1,a_2,\ldots ,a_m)$ be a $2$-bridge knot represented by nonzero even integers 
$a_1,a_2,\ldots ,a_m$. 
We define an irreducible fraction $p/q$ with $0<|q|<p$ as
\begin{equation*} 
\frac{p}{q} = a_1+\cfrac{1}{a_2+ \cfrac{1}{\cdots +\cfrac{1}{a_m}}}. \end{equation*}
It is known that $p$ is odd and $q$ is even, and the determinant of $k$ is equal to $p$ (See \cite{Nanasawa} and \cite{KS}). 
In particular, we have $p>1$. 
By virtue of a result of Joseph \cite[ Corollary 1.4]{Joseph}, the $2$-twist spin $\tau^2(k)$ of $k$ is not $0$-slice. 
Consequently, the 2-twist spin of any 2-bridge knot is not 0-slice.
On the other hand, if $(a_1,a_2,\ldots ,a_m)$ satisfies the condition of Theorem \ref{THM2-bridge}, 
then $k$ is $1$-slice, and hence its slice depth $\operatorname{sd}(\tau^2(k))$ is equal to $1$. 
\end{proof}

\begin{exmp}
Let $k$, $p$, $q$ be defined as in Corollary \ref{2-bridgeCor}.
If $p$ and $q$ satisfy the following condition, then $k$ satisfies the condition of Theorem \ref{THM2-bridge}.

\noindent(i) $p^2-4pq-q^2=1$: The square root of five has the following infinite continued fraction expansion: $\sqrt{5}=[2,4,4,\dots]$.
Let $x_n/y_n$ denote the $n$-th convergent $[2,4,\dots,4]$, where the entry $4$ is repeated $n$ times. It is well known that for odd $n\in\mathbb{N}$, each pair $(x_n,y_n)$ is a solution to the Pell equation $x^2-5y^2=1$. Now, setting 
\begin{equation*}
    \frac{p_n}{q_n}=\frac{x_n}{y_n}+2,
\end{equation*}
we obtain
\begin{equation*}
    p_n^2-4p_nq_n-q_n^2=x_n^2-5y_n^2.
\end{equation*}
Hence, the finite continued fraction $p_n/q_n=[4,4,\dots,4]$ satisfies $p_n^2-4p_nq_n-q^2_n=1$.
Consequently, if $p$ and $q$ satisfy $p^2-4pq-q^2=1$, then $k$ is the 2-bridge knot $C(4,\dots,4)$. In this case, the word $w_k$ is empty, and thus $k$ satisfies the condition of Theorem \ref{THM2-bridge}.
For instance, the first few values are
\begin{align*}
    &[4,4]=\frac{p_1}{q_1}=\frac{17}{4},\\
    &[4,4,4,4]=\frac{p_3}{q_3}=\frac{305}{72},\\
    &[4,4,4,4,4,4]=\frac{p_5}{q_5}=\frac{5473}{1292}.\\
\end{align*}

(ii) $p^2-4pq-2q^2=1$: The square root of six has the following infinite continued fraction expansion: $\sqrt{6}=[2,2,4,2,4,\dots]$.
Let $x_n/y_n$ denote the $n$-th convergent $[a_0,a_1,a_2,\dots a_n]$. It is well known that for odd $n\in\mathbb{N}$, each pair $(x_n,y_n)$ is a solution to the Pell equation $x^2-6y^2=1$. Now, setting 
\begin{equation*}
    \frac{p_n}{q_n}=\frac{x_n}{y_n}+2,
\end{equation*}
we obtain
\begin{equation*}
    p_n^2-4p_nq_n-2q_n^2=x_n^2-6y_n^2.
\end{equation*}
Hence, the finite continued fraction $p_n/q_n=[4,2,\dots,4,2]$ satisfies $p_n^2-4p_nq_n-2q_n^2=1$.
Consequently, if $p$ and $q$ satisfy $p^2-4pq-2q^2=1$, then $k$ is the 2-bridge knot $C(4,2,\dots,4,2)$. In this case, the word $w_k$ is $E\dots E$, and thus $k$ satisfies the condition of Theorem \ref{THM2-bridge}.
For instance, the first few values are

\begin{align*}
    &[4,2]=\frac{p_1}{q_1}=\frac{9}{2},\\
    &[4,2,4,2]=\frac{p_3}{q_3}=\frac{89}{20},\\
    &[4,2,4,2,4,2]=\frac{p_5}{q_5}=\frac{881}{198}.\\
\end{align*}

\end{exmp}


\subsection{Pretzel knots}

A knot given by the diagram in Figure \ref{pretzel1} corresponding to a sequence of nonzero integers $(a_1,\dots, a_m)$ is called a pretzel knot, denoted by $P(a_1,\dots, a_m)$.
In the diagram, $a_i$ denotes $a_i$ half-twists.
For details on pretzel knots, see Kawauchi \cite{Kawauchi}.

\begin{figure}[tbh]
\centering
\includegraphics[width=5cm]{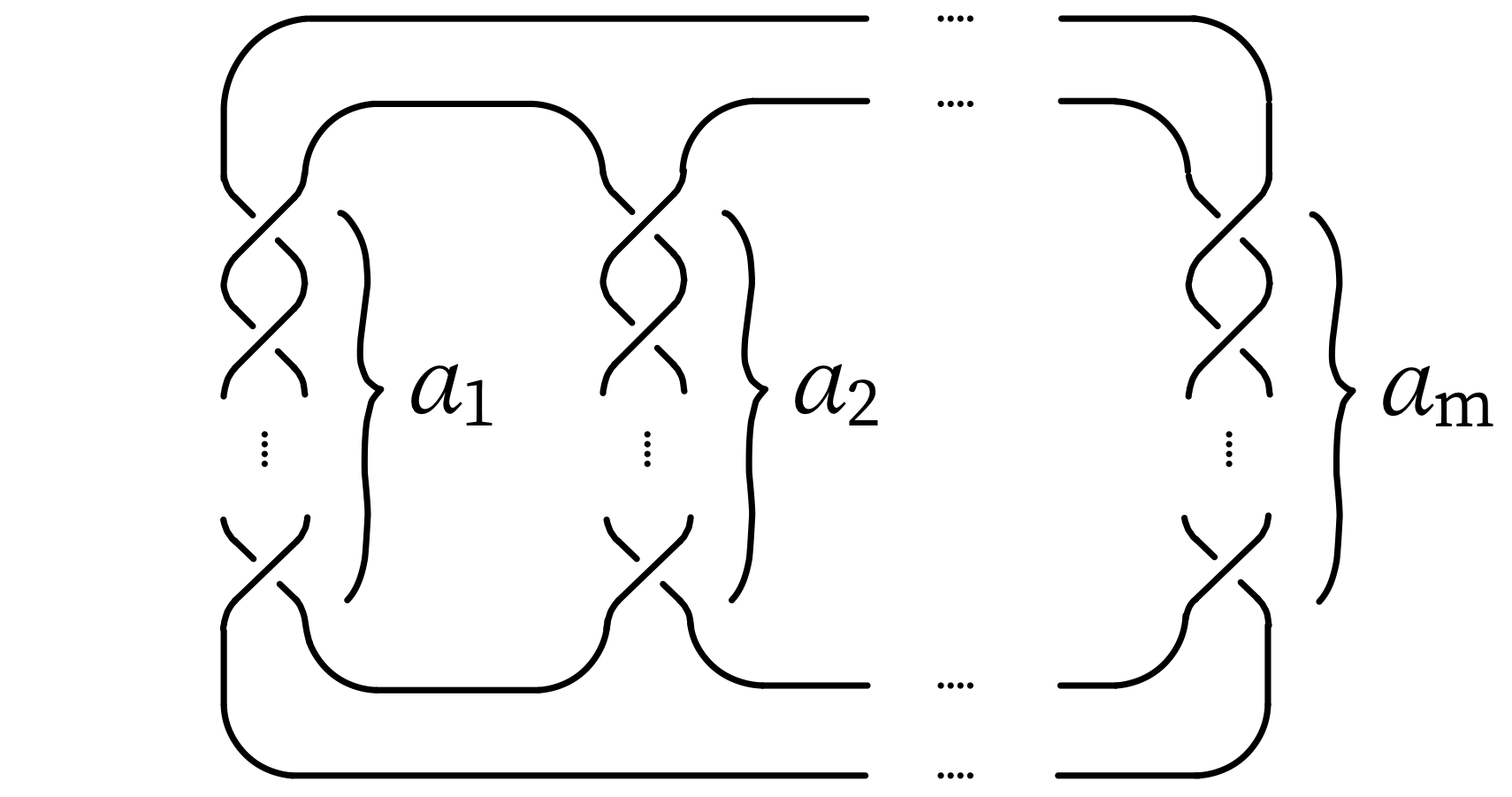}
\caption{Diagram of a pretzel knot $P(a_1,\dots, a_m)$.}
\label{pretzel1}
\end{figure}

\begin{thm}\label{THMPretzel}
    For any positive integer $n$, the $n$-twist spin of the pretzel knot $P(4i+1, 8i+1, 8i+3)$ is 2-slice for any $i\in\mathbb{N}$.
\end{thm}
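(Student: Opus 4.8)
The plan is to apply Proposition~\ref{sliceprop} with $m=2$. Write $p=4i+1$, so that the three pretzel parameters are $p$, $2p-1$, $2p+1$ with $p\equiv 1\pmod 4$; the point of the specific values $4i+1,\,8i+1,\,8i+3$ is exactly these two relations together with this congruence. First I would fix a banded-tangle presentation $(T,\mathcal B)$ of $k=P(p,2p-1,2p+1)$: starting from the standard three-column pretzel diagram of Figure~\ref{pretzel1}, I would cut it open into a tangle $T$ without circles whose closure is $k$, and attach two disjoint bands $b_1,b_2$ so that $T\cup b_1\cup b_2$ is connected and so that each circle $S_i$ formed by $b_i(\{0\}\times I)$ and a subarc of $T$ bounds an obvious disk. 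Concretely, $b_1$ and $b_2$ should be chosen as bands cutting open the two handles of a genus-one spanning surface of $k$ — one band interacting with the $(2p-1)$- and $(2p+1)$-columns (whose half-twist counts sum to $4p\equiv 0\pmod 4$) and the other with the $p$-column — arranged so that after the prescribed deletions $T_0\cup\mathcal B_0-\big(\bigcup_j b_j([0,1)\times I)\cup A_j\big)$ becomes the trivial tangle without bands.

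Next I would run the operations \textup{(1)--(6)} of Lemma~2.1 on $(T,\mathcal B)$, following the template of the proof of Theorem~\ref{THM2-bridge}: apply operation (6) to $b_1$ and then to $b_2$ to make them vertical/horizontal and transfer the half-twists of the pretzel columns onto the bands; remove these four half-twists (equivalently, two full twists) at a time by operation (3) together with Remark~3.1; and remove each residual full twist by operation (4), noting that this winds the corresponding belt sphere once around its $1$-handle. The congruences are precisely what makes the residuals manageable: since $2p-1\equiv 1$ and $2p+1\equiv 3\pmod 4$ with $(2p-1)+(2p+1)=4p\equiv 0\pmod 4$, the twists transferred onto $b_1$ from those two columns cancel together, while the residual twist coming from $b_2$ and the $p\equiv 1\pmod 4$ column is removed by operation (4) with its winding undone at the appropriate stage. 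The inter-column crossings of the pretzel that are not absorbed into a single band are removed by sliding $h_1$ over or under $h_2$ by operations (1) and (5), with the time slices arranged as in Section~3 — $h_1$ supported over $t\in[0,1/8]$ and $h_2$ over $t\in[3/8,1/2]$, say — so that the two belt spheres stay disjoint throughout, and any winding of a belt sphere around its own $1$-handle produced by operation (4) is eliminated either by reversing a later full twist via operation (3) or by transporting it across the other handle via operations (1)/(5).

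At the end of this sequence I would be left with a pair $(T_0,\mathcal B_0)$ in which $T_0$ is a trivial tangle, $b_1$ and $b_2$ are attached without twists, and — reading off the final motion pictures — each belt sphere $S_i^*$ meets $b_i(\{0\}\times I)$ in a single point, so that conditions (1)--(3) preceding Proposition~\ref{sliceprop} are satisfied: no other band meets $S_1$, each $S_i$ bounds a disk $D_i$, and the tangle trivializes after the prescribed deletions. Proposition~\ref{sliceprop} with $m=2$ then yields that $\tau^n(k)$ is $2$-slice for every positive integer $n$, which is the assertion, and hence $\operatorname{sd}(\tau^n(k))\le 2$.

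The main obstacle is the second step: verifying that all the windings produced by operation (4) actually cancel for this particular family. With two $1$-handles that must be slid across one another and several residual full twists distributed over their bands, one must check that every winding of a belt sphere around its handle is undone — by a later reversed full twist of the correct parity on the same band, or carried past the other handle by operations (1) and (5) without the two belt spheres ever meeting. The choice $4i+1,\,8i+1,\,8i+3$ is exactly what makes this bookkeeping close up, and I expect the argument to be presented, as in Theorem~\ref{THM2-bridge}, as an explicit finite sequence of motion pictures.
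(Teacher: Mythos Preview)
Your overall strategy --- apply Proposition~\ref{sliceprop} with $m=2$ by attaching two bands to a tangle for $P(p,2p-1,2p+1)$ and reducing to $(T_0,\mathcal B_0)$ via operations (1)--(6) --- matches the paper. However, your proposed band placement fails on parity. You have $b_2$ interact only with the $p$-column; operation (6) then transfers $p=4i+1$ \emph{half}-twists onto $b_2$, and since $p$ is odd this leaves a single half-twist, not a full twist. Operation (4) removes a full twist and operation (3) (with Remark~3.1) removes pairs of full twists, so neither can clear a residual half-twist; your claim that ``the residual twist \ldots\ is removed by operation (4)'' is therefore incorrect, and the tracking of belt-sphere windings you outline cannot even begin.

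The paper's placement is different and avoids operation (4) altogether. The bands $b_1,b_2$ sit across the $p_i$- and $q_i$-columns respectively; applying (6) to each transfers $p_i$ and $q_i$ half-twists. Then the $r_i$-column is unwound, which adds $r_i$ half-twists to \emph{both} bands and introduces $r_i$ crossings between them. The totals $p_i+r_i=12i+4$ and $q_i+r_i=16i+4$ are both divisible by $4$, so all band twists are removed purely by operation (3) via Remark~3.1; since $r_i$ is odd, operation (3) also reduces the inter-band crossings to one. A second application of (6) and a slide by (5) then yield $(T_0,\mathcal B_0)$ with no belt-sphere winding ever created. Thus the ``main obstacle'' you anticipate simply does not arise: the point of the specific family is that $p_i+r_i\equiv q_i+r_i\equiv 0\pmod 4$ together with $r_i$ odd, not the relations $q_i=2p_i-1$, $r_i=2p_i+1$ that you emphasize.
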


\begin{proof}

\begin{figure}[tbh]
\centering
\includegraphics[width=10cm]{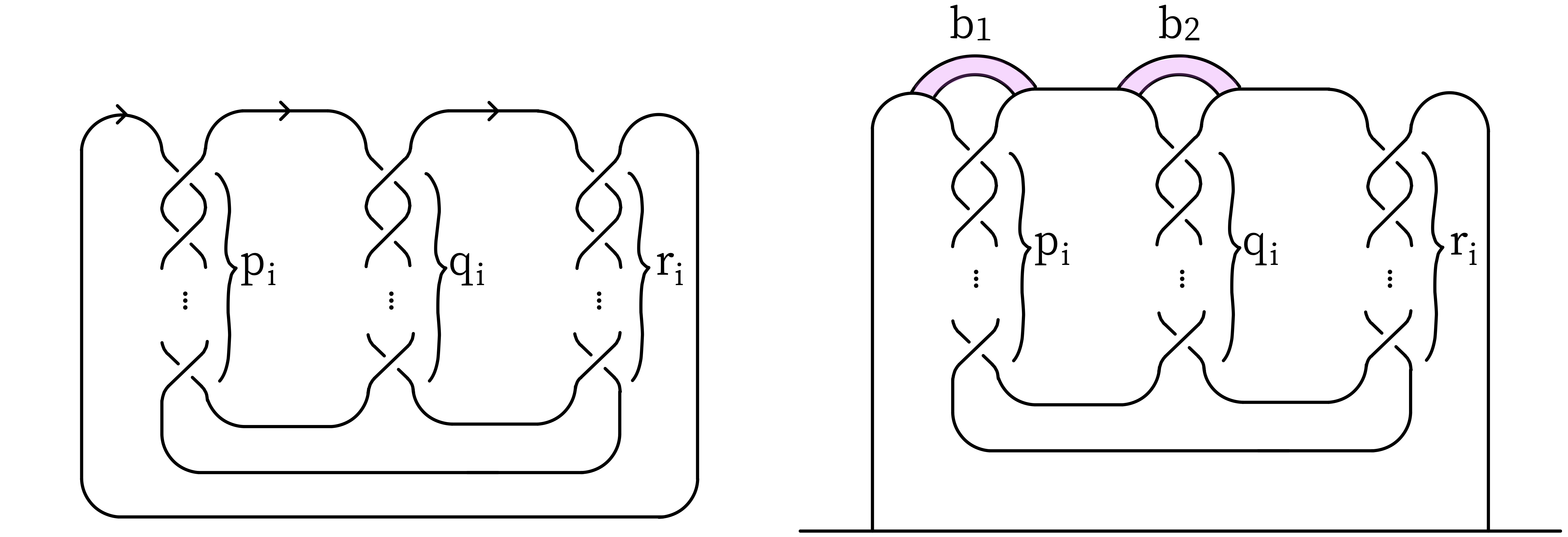}
\caption{$P(p_i, q_i, r_i)$ and the corresponding banded tangle.}
\label{pretzel2}
\end{figure}

  We put $p_i=4i+1, q_i=8i+1, r_i=8i+3$. We attach two bands $b_1$ and $b_2$ to the tangle of the pretzel knot as illustrated in the right diagram of Figure \ref{pretzel2}. By applying the operation (6) to each band, we transfer $p_i$ and $q_i$ half twists to $b_1$ and $b_2$, respectively. Subsequently, by removing the $r_i$ half-twists, the total number of half-twists accumulated in $b_1$ and $b_2$ becomes $p_i+r_i=12i+4$ and $q_i+r_i=16i+4$, respectively. The two bands also become entangled with $r_i$ crossings between them.
  Since $r_i=4i+3$ is odd, we can reduce the number of crossings between $b_1$ and $b_2$ to one by appropriately switching the over- and under-crossings using the operation (3). Moreover, as the accumulated half-twists in both bands are divisible by 4, all twists can be eliminated via repeated application of the operation (3) (See Remark 3.1).
  
  We then apply the operation (6) once again. By deforming the tangle into a trivial one, the diagram transforms into the configuration shown in Figure \ref{pretzel deformation}.
  After repositioning the ends of the bands using the operation (5), the resulting diagram satisfies the conditions of $(T_0, \mathcal{B}_0)$ described in Section 3. 
 Therefore, since the tangle of the pretzel knot $P(4i+1, 8i+1, 8i+3)$ can be transformed into $(T_0, \mathcal{B}_0)$, it follows from Proposition 3.2 that the $n$-twist spin of the given pretzel knot is 2-slice for any $n$.

\begin{figure}[tbh]
\centering
\includegraphics[width=10cm]{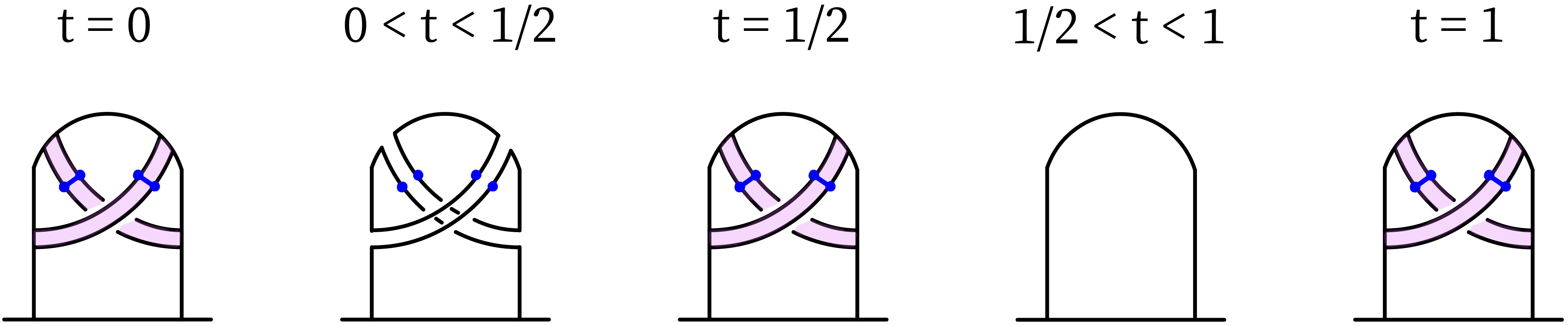}
\caption{Motion picture after the second (6).}
\label{pretzel deformation}
\end{figure}

\end{proof}

\begin{cor}\label{}
    The slice depth of the $2$-twist spin of the pretzel knot $P(4i+1, 8i+1, 8i+3)$ is equal to $1$ or $2$.
\end{cor}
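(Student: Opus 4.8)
The plan is to argue exactly as in the proof of Corollary~\ref{2-bridgeCor}: the upper bound is already available, so the only thing left to establish is that the $2$-twist spin is not $0$-slice.

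First I would apply Theorem~\ref{THMPretzel} with $n=2$, which gives $\operatorname{sd}(\tau^2(P(4i+1,8i+1,8i+3)))\le 2$. It therefore suffices to show that $\tau^2(P(4i+1,8i+1,8i+3))$ is \emph{not} $0$-slice, since then its slice depth is at least $1$ and, combined with the upper bound, must equal $1$ or $2$.

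To rule out $0$-sliceness I would invoke Joseph's obstruction \cite[Corollary 1.4]{Joseph}, applied in the same manner as in the proof of Corollary~\ref{2-bridgeCor}; the only input it needs is that the determinant of the underlying classical knot is not $\pm1$. For a pretzel knot $P(a_1,a_2,a_3)$ with $a_1,a_2,a_3$ odd one has $\det P(a_1,a_2,a_3)=|a_1a_2+a_2a_3+a_3a_1|$, and substituting $(a_1,a_2,a_3)=(4i+1,8i+1,8i+3)$ yields $\det P(4i+1,8i+1,8i+3)=128i^2+64i+7$, which is never $\pm1$. In addition $\gcd(4i+1,8i+1,8i+3)=1$ — since $(8i+3)-(8i+1)=2$ while $8i+1$ is odd — so the first homology of the double branched cover of $P(4i+1,8i+1,8i+3)$ is cyclic of order $128i^2+64i+7$; this is precisely the setting in which Joseph's corollary yields a nontrivial class in the ideal class monoid, just as for $2$-bridge knots. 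Hence $\tau^2(P(4i+1,8i+1,8i+3))$ is not $0$-slice, and together with Theorem~\ref{THMPretzel} this shows that its slice depth is $1$ or $2$.

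The real obstacle — the reason the value is pinned down only to $\{1,2\}$ — is distinguishing $1$-sliceness from $2$-sliceness: Proposition~\ref{sliceprop} together with Theorem~\ref{THMPretzel} produces a concordance realized with two $1$-handles, but nothing in the present paper shows that a single $1$-handle cannot suffice. A secondary point to verify is that the precise hypotheses of \cite[Corollary 1.4]{Joseph} hold for these pretzel knots and not merely for $2$-bridge knots, which, as indicated, comes down to the determinant computation above together with the cyclicity of $H_1$ of the double branched cover.
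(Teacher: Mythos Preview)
Your upper bound via Theorem~\ref{THMPretzel} matches the paper, but your lower bound takes a different route. The paper does \emph{not} invoke Joseph here; instead it appeals to Dai--Miller \cite{DM}, who show that the family $\{\tau^{2}(P(2j+1,4j+1,4j+3))\}_{j\geq 1}$ is linearly independent in the $0$-concordance monoid $\mathcal{M}_0$. Since $P(4i+1,8i+1,8i+3)$ is the case $j=2i$ of that family, non-$0$-sliceness is immediate from their result, with no arithmetic of determinants or branched covers needed. Your approach via \cite[Corollary~1.4]{Joseph} is a reasonable alternative, and your determinant and $\gcd$ computations are correct; its advantage is that it is self-contained once Joseph's statement is granted, while its cost is precisely the caveat you flag---you must check that Joseph's hypotheses are met for these pretzel knots rather than only for $2$-bridge knots. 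The paper's choice of Dai--Miller sidesteps that verification entirely, since their theorem is stated for exactly this pretzel family.
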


\begin{proof}
    It is regarded as monoid that the quotient set of $2$-knots in $S^4$ by $0$-concordance and the operation of connected sum. This is called $0$-concordance monoid denoted by $\mathcal{M}_0$.
    Based on the result of Dai and Miller \cite{DM}, the family 
    \begin{align*}
        \mathcal{F}=\{\text{2-twist spin of }P(2i+1, 4i+1,4i+3)\},
    \end{align*}
     is linearly independent in $\mathcal{M}_0$; that is, none of $P(2i+1, 4i+1,4i+3)$ are 0-slice.
Therefore, in combination with Theorem 4.4, the slice depth of the 2-twist spin of the pretzel knot $P(4i+1, 8i+1, 8i+3)$ is equal to either 1 or 2.
\end{proof}


\subsection{Ribbon knots of $1$-fusion}

A ribbon knot of 1-fusion is a band sum of 2-component trivial link and its diagram is illustrated on the left of Figure \ref{ribbon knot of 1-fusion}. The integers $a_1,a_2,\dots,a_m$ specify the number of times the band winds around one of the trivial components as shown on the right of Figure~\ref{ribbon knot of 1-fusion}, with negative values corresponding to windings in the opposite direction. In the central rectangle, a part of the single band appears as $m+1$ subbands, which may exhibit twists or have portions that pass over or under each other. When considering two distinct subbands, they may cross over and under each other or become entangled. The left and right ends of the subband are connected at the same vertical level. Due to the coherent orientation of the two trivial knots, the ribbon band must be either untwisted or twisted by an even number of half-twists. We denote such a ribbon knot of 1-fusion by $R(a_1,\ldots,a_m)$.
For details on ribbon knots of 1-fusion, see Mizuma \cite{Mizuma}.

\begin{thm}\label{THMribbon}
    For any positive integer n, the n-twist spin of the ribbon knot of 1-fusion $R(a_1, \dots ,a_m)$ is 2-slice if the following condition holds:
    \begin{eqnarray*}
        a_1+a_2+\dots+a_m+\sigma+w\equiv0 \mod2, 
    \end{eqnarray*}
    where
    \begin{itemize}
        \item $\sigma$ is the signed sum of the full twists originally present in the ribbon band, and
        \item $w$ is the sum of the crossing signs of the core of the ribbon band, counted after modifying the crossings so that the band becomes unknotted.
    \end{itemize}
\end{thm}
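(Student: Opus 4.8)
The plan is to follow the template established in the proofs of Theorems 4.1 and 4.4. Present $R(a_1,\dots,a_m)$ as the closure of a tangle $T$ to which we attach a disjoint union $\mathcal{B}=b_1\cup b_2$ of two bands, deform $(T,\mathcal{B})$ by a finite sequence of the operations (1)--(6) into a pair $(T_0,\mathcal{B}_0)$ of the type described just before Proposition 3.2, check that the belt spheres of the two $1$-handles of $F^n(T,\mathcal{B})$ are carried to circles $S_1^*,S_2^*$ meeting each band edge in a single point, and then apply Proposition 3.2 with $m=2$. First I would fix the tangle presentation. Using the diagram of Figure \ref{ribbon knot of 1-fusion}, $R(a_1,\dots,a_m)$ is a band sum of a $2$-component trivial link along the ribbon band, and I would place $b_1$ and $b_2$ so that the ribbon band together with the central rectangle of $m+1$ subbands lies in the ball $B$ between the two bands, in direct analogy with the way the two bands straddle the three twist regions of a pretzel knot in Figure \ref{pretzel2}; the requirement that $T\cup\mathcal{B}$ be connected is arranged exactly as there.

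The core of the argument is the bookkeeping after applying the operation (6) first to $b_1$ and then to $b_2$. As in the $2$-bridge and pretzel cases, this makes the two bands vertical and then horizontal and pushes all the twisting data onto them: the $\sigma$ full twists originally carried by the ribbon band, the half-twists produced by the windings $a_1,\dots,a_m$ of the subbands around a trivial component, and the self-crossings of the core of the ribbon band (whose signed count is $w$) become a total number of half-twists on $b_1$, a total number of half-twists on $b_2$, and a number of crossings linking $b_1$ with $b_2$. I would reduce the inter-band crossings to at most one and the accumulated half-twists modulo $4$ by the operation (3) (Remark 3.1); each band is thereby left with at most one residual full twist, which is removed by the operation (4), and by the analysis of Section 3 this winds the corresponding belt sphere once around its $1$-handle, exactly as in case (iv) of the proof of Theorem 4.1. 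The key point I expect to have to establish is that the hypothesis $a_1+\dots+a_m+\sigma+w\equiv0\pmod2$ is precisely the condition guaranteeing that these residual windings can be undone --- e.g. that the two half-twist totals end up congruent modulo $4$, so that when both are $\equiv2\pmod4$ the windings on $b_1$ and $b_2$ occur in matching positions and cancel against each other by the operation (3) reversing a full twist, just as the winding on one band is removed against a later box of the appropriate parity at the end of the proof of Theorem 4.1. Carrying out this count --- tracking how $\sigma$, $w$ and $\sum a_i$ feed into the half-twist totals on $b_1$ and $b_2$ modulo $4$ and into the linking-crossing count modulo $2$, and verifying that the stated parity forces the cancellation --- is the step I expect to be the main obstacle.

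Once the residual windings have been removed, I would apply the operation (6) a second time, deform the tangle to a trivial one, and reposition the ends of $b_1$ and $b_2$ using the operation (5). By the belt-sphere tracking for the operation (5) after (6) carried out in Section 3 (Figure \ref{belt sphere(5)}), this can be arranged so that the two belt spheres remain disjoint and each appears as a single point on both edges of its band. The resulting pair is then of the form $(T_0,\mathcal{B}_0)$ with belt spheres $S_1^*,S_2^*$ meeting conditions (1)--(3) stated before Proposition 3.2, so Proposition 3.2 with $m=2$ gives that $\tau^n(R(a_1,\dots,a_m))$ is $2$-slice for every positive integer $n$. I would record the intermediate deformation and the final configuration in motion-picture figures analogous to Figure \ref{pretzel deformation}.
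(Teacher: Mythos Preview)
Your plan follows the pretzel template too closely and thereby misses the much simpler route the paper actually takes. In the paper's proof, operation~(6) is applied to $b_2$ \emph{only}; $b_1$ is never sent through a twist region. After this single application of~(6), $b_2$ occupies the position of the original ribbon band and, once operation~(3) unknots it, carries $\sigma+w$ full twists. The windings $a_1,\dots,a_m$ are then removed not by a second~(6) but by repeatedly \emph{sliding} $b_2$ over $b_1$ via operation~(1); each slide contributes $a_i$ further full twists to $b_2$. Thus all of the twisting lands on a single band with total $a_1+\cdots+a_m+\sigma+w$ full twists, and the hypothesis says exactly that this integer is even. By Remark~3.1 an even number of full twists cancels in pairs using only operation~(3), so operation~(4) is never invoked and no belt-sphere winding ever appears. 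A second~(6) on $b_2$ and a slide then give $(T_0,\mathcal{B}_0)$ directly.

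The gap in your proposal is therefore not just the unfinished bookkeeping: the whole mod-$4$ mechanism you anticipate---splitting the twist data between $b_1$ and $b_2$, producing inter-band crossings, invoking~(4), and hoping the two residual belt-sphere windings cancel against one another---is an artifact of applying~(6) to both bands. There is no natural way to apportion $\sigma$, $w$, and the $a_i$ between the two bands in the ribbon setting (unlike the pretzel case, where the three twist boxes sit symmetrically between the bands), so the count you flag as ``the main obstacle'' may well not come out as you hope. The paper sidesteps all of this: the parity condition is literally ``the number of full twists on $b_2$ is even,'' with no mod-$4$ refinement and no belt-sphere cancellation needed.
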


\begin{proof}

     \begin{figure}[tbh]
\centering
\includegraphics[width=9cm]{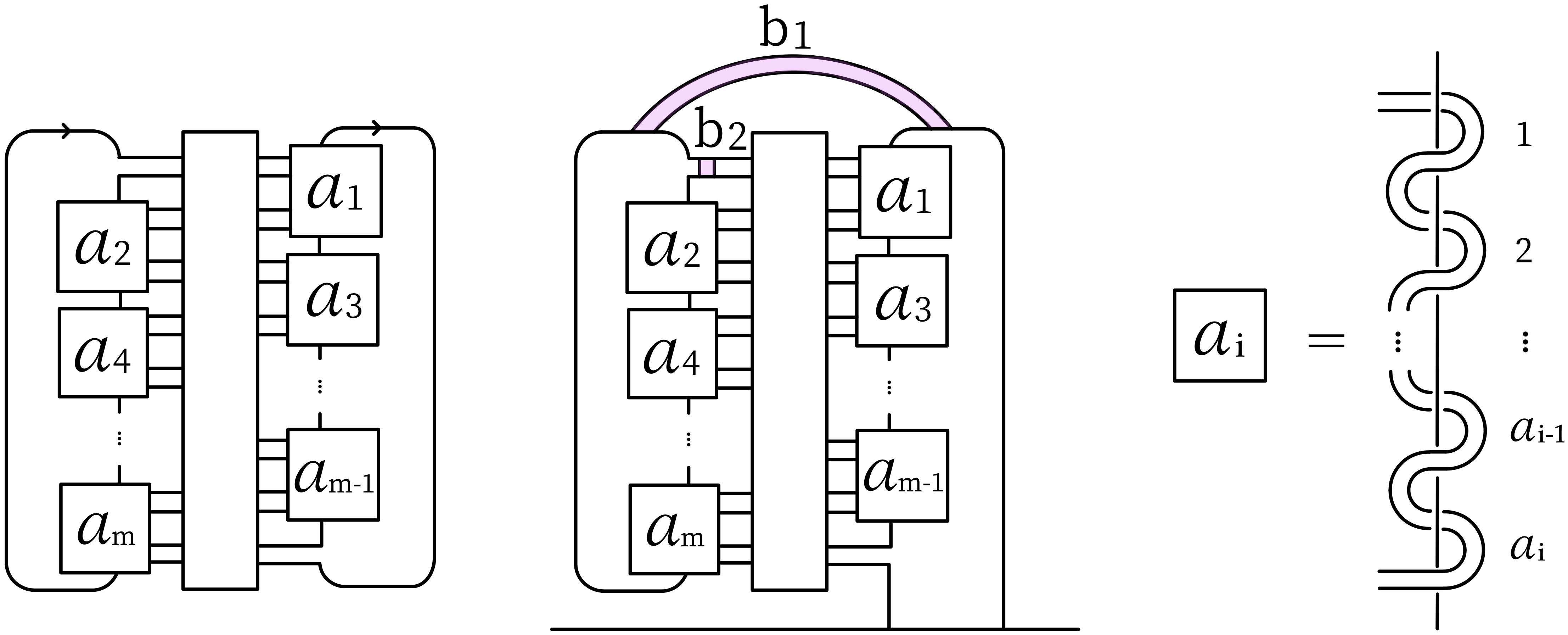}
\caption{$R(a_1, \dots ,a_m)$ and the corresponding banded tangle.}
\label{ribbon knot of 1-fusion}
\end{figure}

    As illustrated on the middle of Figure \ref{ribbon knot of 1-fusion}, we attach two band $b_1$ and $b_2$ to the tangle of the ribbon knot.
We then apply the operation (6) to the band $b_2$. As a result, $b_2$ takes the place of the original ribbon band. By suitably applying the operation (3) to adjust the over/under crossings, the entanglement of the band can be eliminated.
 Consequently, $m+1$ parallel subbands appear inside the rectangle. At this point in the  process,the band accumulates $\sigma+w$ full twists.
Next, we slide $b_2$ over $b_1$ to eliminate the $a_1$ windings. During this process,  $b_2$ acquires $a_1$ full twists. We then slide $b_2$ over $b_1$ again to eliminate the $a_2$ windings, resulting in $b_2$ acquiring an additional $a_2$ full twists. By repeating this process to eliminate all the windings corresponding to $a_1, a_2, …, a_m$, $b_2$ accumulates a total of $a_1+a_2+\dots+a_m+\sigma+w$ full twists.
 Since $a_1+a_2+\dots+a_m+\sigma+w$ is even, then all full twists can be eliminated by applying the operation (3) repeatedly (See Remark 3.1). Subsequently, applying the operation (6) to the band $b_2$ and sliding it into an appropriate position transforms the diagram into $(T_0, \mathcal{B}_0)$ described in Section 3. 
 Therefore, since the tangle of the ribbon knot of $1$-fusion $R(a_1, …a_m)$ that satisfies $a_1+a_2+\dots+a_m+\sigma+w$ is even can be transformed into $(T_0, \mathcal{B}_0)$, it follows from Proposition 3.2 that the $n$-twist spin of the given ribbon knot is 2-slice for any $n$.

\end{proof}


\subsection{Classical knots with unknotting number $u$}
The unknotting number of a classical knot is the minimal number of crossing changes required to transform the knot into the unknot.
According to Yamamoto \cite[Lemma 2.1]{Yamamoto}, a classical knot with unknotting number $u$ admits a diagram of the following form. Let $\gamma_0$ be a trivial circle, and let $\gamma_1, \dots, \gamma_u$ be trivial circles each linked once with $\gamma_0$. For each $i=1, \dots, u$, let $S_i$ be a band connecting $\gamma_0$ and $\gamma_i$. These satisfy the following conditions:
\begin{enumerate}
    \item $\gamma_0 \cap S_i=\gamma_0 \cap \partial S_i=\beta _i$, 
    \item $\gamma_i$ meets $S_i$ in an arc $\delta _i \subset \partial S_i$, 
    \item $\gamma_i$ does not intersect $S_j$ for $i\neq j$, 
    \item $\partial S_i$ winds none of $\gamma_0, \dots, \gamma_u$ except for $\alpha _i$ in the diagram, 
    \item the diagram $D(k)$ of the knot $k$ is given by \\$D(k)=(\gamma _0\cup \dots \cup\gamma _u\cup\partial S_1\cup \dots \cup\partial S_u)-\text{Int} (\beta_1\cup \dots \cup\beta _u\cup\delta_1\cup \dots \cup\delta_u)$, 
    \end{enumerate}
where $i,j=1, \dots, u$ (See Figure \ref{u}).

\begin{figure}[tbh]
\centering
\includegraphics[width=12.6cm]{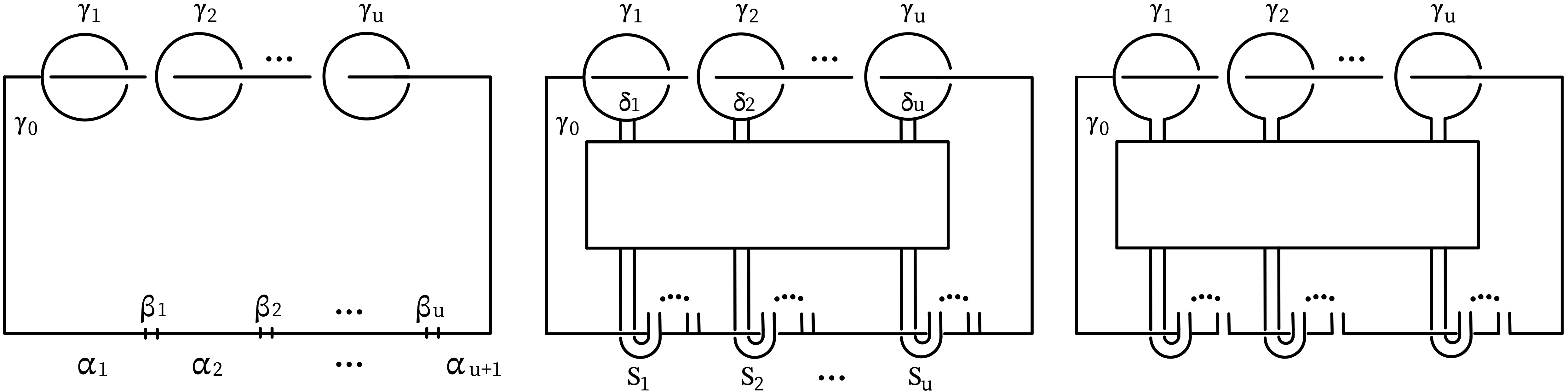}
\caption{Construction of $D(k)$  for $k$ with unknotting number $u$.}
\label{u}
\end{figure}

If the band $S_i$ contains an odd number of half-twists, we can modify the diagram by flipping the circle $\gamma_i$, thereby making the total number of half-twists even.

In what follows, we work with a diagram that satisfies the above conditions.

\begin{thm}\label{THMunknotting}
    For any positive integer n, the n-twist spin of a classical knot with unknotting number $u$ is $u$-slice if the following condition holds:
    \begin{eqnarray*}
        \sigma_i+w_i+\lambda_i\equiv0 \mod2 \quad\text{for all} \quad i=1, \dots ,u 
    \end{eqnarray*}
    where
    \begin{itemize}
        \item $\sigma_i$ denotes the signed sum of full twists originally present in the band $S_i$, 
        \item $w_i$ is the sum of the crossing signs of the core of $S_i$, counted after modifying the crossings so that the band becomes unknotted, 
        \item $\lambda_i$ is the number of times $S_i$ wraps around  $\gamma_0$.

    \end{itemize}
    See Figure \ref{unknotting number with band}.
\end{thm}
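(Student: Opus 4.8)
The plan is to follow the strategy used for ribbon knots of $1$-fusion in Theorem \ref{THMribbon}, now running the argument simultaneously over the $u$ bands of Yamamoto's diagram. Beginning with the diagram $D(k)=(\gamma_0\cup\cdots\cup\gamma_u\cup\partial S_1\cup\cdots\cup\partial S_u)-\mathrm{Int}(\beta_1\cup\cdots\cup\beta_u\cup\delta_1\cup\cdots\cup\delta_u)$ of Figure \ref{u}, and flipping each $\gamma_i$ for which $S_i$ carries an odd number of half-twists, I would first produce a tangle $T$ representing $k$ together with $u$ pairwise disjoint bands $b_1,\dots,b_u$, where $b_i$ is placed alongside the band $S_i$ as in Figure \ref{unknotting number with band}. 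This is possible because the circles $\gamma_1,\dots,\gamma_u$ are mutually disjoint and each $\gamma_i$ meets only $S_i$, so the supports of the $b_i$ can be separated. The plan is then to show that $(T,\mathcal{B})$ with $\mathcal{B}=b_1\cup\cdots\cup b_u$ is equivalent, in the sense of Lemma 2.1, to a pair $(T_0,\mathcal{B}_0)$ satisfying the conditions preceding Proposition \ref{sliceprop}, with the belt spheres of the associated $1$-handles appearing as the required single points $S_1^*,\dots,S_u^*$.

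To that end I would treat the bands one at a time. For each $i$, apply the operation (6) to $b_i$, so that $b_i$ comes to occupy the region previously swept out by $S_i$; then use the operation (3) to change the over/under information of the core of $b_i$ until it is unknotted, which deposits $w_i$ full twists on $b_i$, while the $\sigma_i$ full twists originally on $S_i$ are already present. Undoing the $\lambda_i$ windings of $S_i$ about $\gamma_0$ by handle slides — operations (1) and (5), during which the belt spheres of the interacting handles are kept disjoint exactly as described in Section 3 — deposits a further $\lambda_i$ full twists on $b_i$. Thus $b_i$ carries $\sigma_i+w_i+\lambda_i$ full twists in total, and the hypothesis that this integer is even allows all of them to be removed using only the operation (3) (Remark 3.1), never invoking the operation (4); this is precisely what keeps the belt sphere of the $i$-th $1$-handle from winding around it. Carrying this out for $i=1,\dots,u$ trivializes $k$ — here one uses that Yamamoto's $u$ bands realize an unknotting sequence of $u$ crossing changes — and leaves $u$ untwisted bands.

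Finally I would apply the operation (6) once more to each $b_i$ and reposition the band ends by the operation (5), reaching a pair $(T_0,\mathcal{B}_0)$ consisting of a trivial tangle and $u$ bands that satisfies conditions (1)--(3) of the setting before Proposition \ref{sliceprop}, with the belt sphere of each $1$-handle appearing as a single point on the edges of $b_i$. Proposition \ref{sliceprop} then gives that $\tau^n(k)$ is $u$-slice for every positive integer $n$.

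The step I expect to be the main obstacle is verifying that the final configuration genuinely satisfies every condition defining $(T_0,\mathcal{B}_0)$: in particular condition (3), that stripping off $b_i([0,1)\times I)\cup A_i$ one band at a time always returns the union of a trivial tangle and the remaining bands, and the requirement that the $u$ belt spheres be mutually disjoint with each meeting its band in a single point. The disjointness of the $\gamma_i$ and the fact that each $S_i$ interacts only with $\gamma_0$ make this plausible, but one must track the belt spheres carefully through the handle slides that unwind each $b_i$ from $\gamma_0$, and check that none of them is ever forced to wind around a $1$-handle — which would require the operation (4) and destroy the single-point condition, exactly the difficulty met in case (iv) of Theorem \ref{THM2-bridge}. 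It is at this point that the parity hypothesis $\sigma_i+w_i+\lambda_i\equiv 0\pmod 2$ must be used with care for each $i$.
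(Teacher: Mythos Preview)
Your outline has a genuine gap at the step ``Carrying this out for $i=1,\dots,u$ trivializes $k$''. After the first application of (6), the unknotting with (3), and the unwinding of the $\lambda_i$ turns about $\gamma_0$, each band $b_i$ is indeed straight and untwisted, but the tangle is \emph{not} yet trivial: the circle $\gamma_i$ is still Hopf-linked once with $\gamma_0$. That linking is the whole content of Yamamoto's construction---it is what encodes the crossing change---and nothing you have done so far removes it. Invoking ``Yamamoto's $u$ bands realize an unknotting sequence'' is not enough; you must explain how that link is undone by operations (1)--(6).

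In the paper this is handled as follows: after the second application of (6) to $b_i$, the clasp between $\gamma_i$ and $\gamma_0$ becomes a single full twist on $b_i$ (see Figure~\ref{unknotting number transformation}), and that twist is then removed by operation (4). So the paper \emph{does} invoke (4), contrary to your claim that it is ``never'' used. This is harmless for the belt sphere: a single (4) applied after the second (6) is exactly the pattern of case~(ii) in the $2$-bridge analysis, where the belt sphere remains vertical. The problematic situation---case~(iv)---occurs only when (4) is needed after both the first and the second (6), and the parity hypothesis $\sigma_i+w_i+\lambda_i\equiv 0\pmod 2$ guarantees precisely that no (4) is needed after the \emph{first} (6). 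Your instinct about where the hypothesis enters is right, but the conclusion is that (4) is used once per band, not zero times.

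A smaller point: the unwinding of $b_i$ from $\gamma_0$ is done by sliding the end of $b_i$ along the tangle circle $\gamma_i$, not by the band-over-band slides (1) or (5); there is no second band involved in that move.
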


    \begin{figure}[tbh]
\centering
\includegraphics[width=11cm]{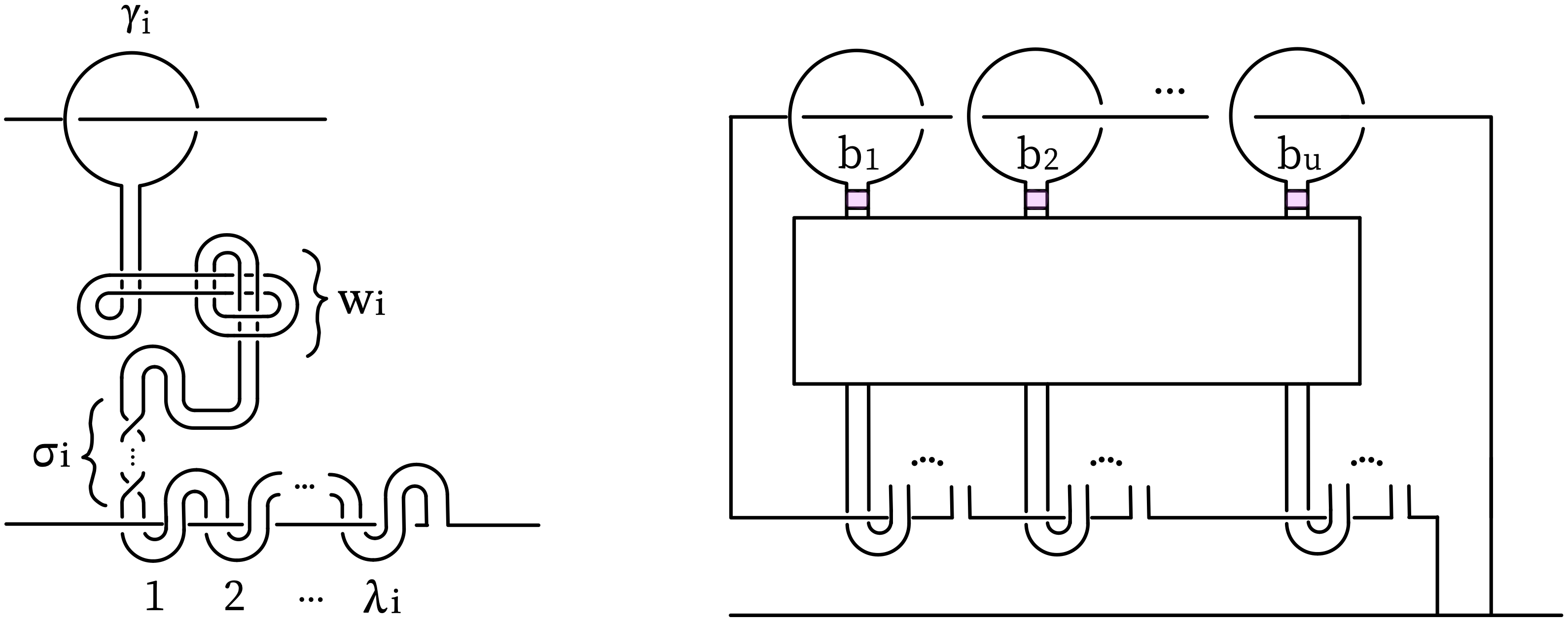}
\caption{Left: $\sigma_i$, $w_i$, $\lambda_i$ indicated. Right: the corresponding banded tangle of a classical knot with unknotting number $u$.}
\label{unknotting number with band}
\end{figure}

\begin{proof}
    To the tangle in the diagram satisfying conditions (1) through (5) above, we attach a band $b_i$ to each $S_i$ as illustrated on the right in Figure \ref{unknotting number with band}. In the central rectangle, $S_i$ may exhibit twists or have parts that pass over or under each other. When considering two distinct bands $S_i$ and $S_j$, they may cross over and under each other or become entangled.
    We then apply the operation (6) to every $b_i$. By appropriately applying the operation (3), we can adjust the over/under crossings to completely separate the bands $b_i$ from each other. If a band $b_i$ is knotted, we again apply the operation (3) to modify the crossings and untie it. We then modify each band $b_i$ so that it descends vertically from $\delta_i$, winds around $\gamma_0$ exactly $\lambda_i$ times, and attaches to $\beta_i$. During this process, $w_i$ full twists are added to $b_i$.

    Next, we remove the $\lambda_i$ windings by sliding $b_i$ along $\gamma_i$, which results in an additional $\lambda_i$ full twists being introduced into $b_i$. Consequently, the band $b_i$ becomes a straight band connecting  $\beta_i$ and $\delta_i$, and the total number of full twists it contains is $ \sigma_i+w_i+\lambda_i$.
    Since $\sigma_i+w_i+\lambda_i$ is even, all full twists can be eliminated by applying the operation (3).
    We then apply the operation (6) to $b_i$ once again. Since the circle $\gamma_1$ now corresponds to a single full twist as shown in Figure \ref{unknotting number transformation}, we transfer this twist to the band $b_1$, and eliminate it using the operation (4). We perform the same procedure for $\gamma_2, \gamma_3, \dots, \gamma_u$ sequentially.

    As a result, the diagram reduces to one in which the trivial tangle is attached by  $u$ mutually disjoint, unknotted, untwisted bands. By performing appropriate band slides, we obtain the diagram $(T_0, \mathcal{B}_0)$ described in Section 3. 
    Therefore, since the tangle of classical knot with unknotting number $u$ that satisfies the condition in Theorem~\ref{THMunknotting} can be transformed into $(T_0, \mathcal{B}_0)$, it follows from Proposition~\ref{sliceprop} that the $n$-twist spin of the given classical knot is $u$-slice for any $n$.

    \begin{figure}[tbh]
\centering
\includegraphics[width=12.6cm]{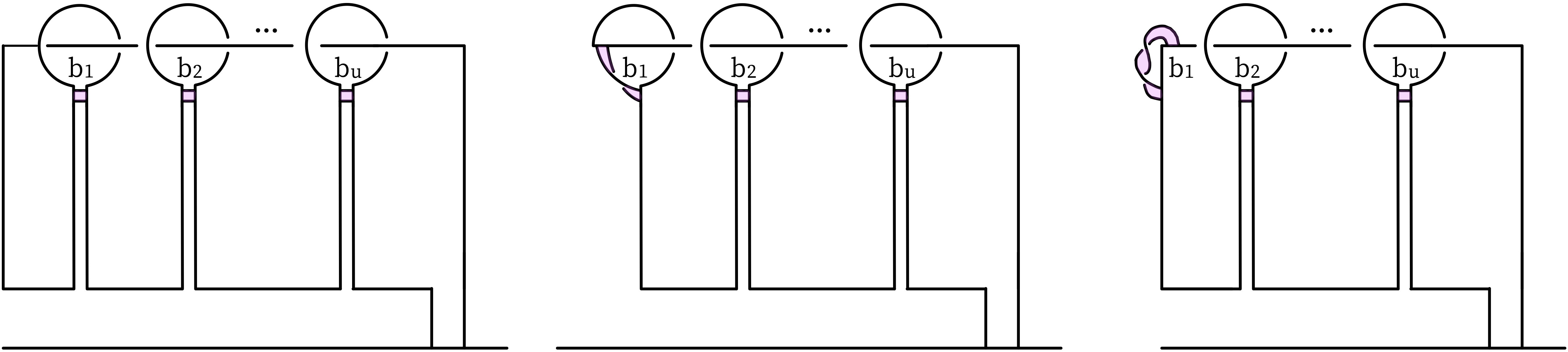}
\caption{Process of transferring the full twist from $\gamma_1$ to $b_1$.}
\label{unknotting number transformation}
\end{figure}

\end{proof} 


\section{Observations and Application}
\label{Observations and Application}


In this section, we first compare the slice depth with the unknotting number for twist spun 2-knots. We then extract examples from the Rolfsen knot table for which the slice depth can be explicitly determined using our result.

In our method, we have shown that if a pair $(T,\mathcal{B})$ obtained by attaching $m$ bands to a tangle representing a knot $k$ can be transformed into $(T_0,\mathcal{B}_0)$, then the $n$-twist spin of $k$ is $m$-slice. In the context of Satoh's work, however, the same situation implies that the $n$-twist spin of $k$ has unknotting number $m$.
For example, our result shows that the $n$-twist spin of a $2$-bridge knot is $1$-slice, while Satoh's result implies that its unknotting number is equal to 1.
This naturally leads to the question: can the slice depth and the unknotting number differ?

One essential difference is that the slice depth can be zero even when the 2-knot is not trivial.
For instance, every ribbon 2-knot is 0-slice, meaning that its slice depth is 0, while the unknotting number of any nontrivial knot is always at least 1.
As a concrete example, the spun 2-knot of the trefoil is 0-slice, but its unknotting number is 1 \cite{HMS}.

Although our method imposes stricter conditions on the deformation of 1-handles, and one might conjecture that there exist 2-knots whose slice depth is greater than their unknotting number, we were unable to determine whether such examples exist.

We now present examples in which the slice depth can be determined by our results. According to Joseph \cite[Theorem1.3]{Joseph}, let $k$ be any classical knot with Alexander polynomial $\Delta_k(t)$ such that $|\Delta_k(-1)|\neq1$, then there exists an integer $n$ for which the $n$-twist spin of $k$ is not 0-slice.
In Rolfsen knot table, all classical knots with crossing number at most 10 except for $10_{124}$ and $10_{153}$, satisfy $|\Delta_k(-1)|\neq1$.
Among these, there are 95 2-bridge knots. 

Among the 2-bridge knots in the Rolfsen knot table with crossing number at most 10, the following 29 knots satisfy the conditions of Theorem~\ref{THM2-bridge}:
\begin{align*}
&5_2, 6_1, 7_4, 7_5, 8_3, 8_6, 8_8, 8_{14}, 9_2, 9_5, 9_8, 9_9, 9_{10}, 9_{15},9_{18}, 9_{19}, 10_1, 10_{3}, \\
&10_{12}, 10_{14}, 10_{16}, 10_{18}, 10_{22}, 10_{24},10_{25}, 10_{27}, 10_{31}, 10_{33}, \text{and } 10_{35}.
\end{align*}
Hence the slice depth of the $2$-twist spin of each of these knots is equal to $1$ by Corollary \ref{2-bridgeCor}.

\section*{acknowledgement}
The author would like to express sincere gratitude to Professor Hisaaki Endo for his invaluable advice and continuous support throughout the entire course of this research, from its early stages to the completion of this paper.


\begin{thebibliography}{99}

\begin{footnotesize}%

\bibitem{DM}
I. Dai and M. Miller, 
The $0$-concordance monoid admits an infinite linearly independent set, 
{\it Proc. Amer. Math. Soc.} {\bf 151} (2023) 3601--3609. 

\bibitem{HMS}
F. Hosokawa, T. Maeda, S. Suzuki, 
Numerical invariants of surfaces in 4-space, {\it Math. Sem. Notes Kobe Univ.} {\bf 7} (1979) 409--420.

\bibitem{Joseph}
J. Joseph, 
$0$-concordance of knotted surfaces and Alexander ideals, 
{\it arXiv:1911.13112}.

\bibitem{KS}
T. Kanenobu and T. Sumi,
Polynomial invariants of 2-bridge knots through 22 crossings,
{\it Math. Comput.} {\bf 60}, No.~202 (1993) 771--778.

\bibitem{Kawauchi}
A. Kawauchi, 
A survey of knot theory
(Birkhäuser, Basel, 1996).

\bibitem{Kervaire}
M. A. Kervaire, 
Les nœuds de dimensions sup\'{e}rieures, 
{\it Bull. Soc. Math. Fr.} {\bf 93} (1965) 225--271. 

\bibitem{Kirby}
R. Kirby, Problems in low-dimensional topology, in
{\it Geometric Topology}, ed. W. H. Kazez (AMS/IP Stud. Adv. Math., Vol.~2, Amer. Math. Soc. and Int. Press, Providence, RI, 1997), pp.~35--473.

\bibitem{Melvin}
P. M. Melvin, 
Blowing up and down in $4$-manifolds, 
Ph. D. Thesis, University of California, Berkeley (1977). 

\bibitem{Mizuma}
Y. Mizuma, 
Ribbon knots of 1-fusion, the Jones polynomial, and the Casson-Walker invariant,
{\it Rev. Mat. Complut.} {\bf 18} (2005), no. 2, 387--425.

\bibitem{Nanasawa}
J. Nanasawa,
Equivariant crossing numbers for two-bridge knots,
{\it arXiv:2304.00540}.

\bibitem{Sato}
S. Satoh, 
A note on unknotting numbers of twist-spun knots, 
{\it Kobe J. Math.} {\bf 21}, No.~1--2 (2004) 71--82. 


\bibitem{Sunukjian}
N. S. Sunukjian, 
$0$-concordance of $2$-knots, 
{\it Proc. Amer. Math. Soc.} {\bf 149} (2021) 1747--1755. 


\bibitem{Yamamoto}
M. Yamamoto, 
Knots in spatial embeddings of the complete graph on four vertices, Topology Appl. {\bf 36} (1990) 291--298.



\end{footnotesize}

\end{thebibliography}
\end{document}